%% file: main.tex
\documentclass[12pt,reqno]{amsart}
\usepackage[top=1in, bottom=1in, left=1in, right=1in]{geometry}
\usepackage{tikz-cd}
\usepackage{tikz}

\RequirePackage[dvipsnames]{xcolor}

\input{preamble}

\title{Multigraded Regularity of the Complete Flag Variety}
\author{Caitlin M. Davis}
\date{\today}

\begin{document}

\begin{abstract}
We study the multigraded regularity of the complete flag variety under the Pl\"ucker embedding.  In particular, we prove inductive relationships about the regularity regions, and we provide some inner and outer bounds on the regions.
\end{abstract}

\maketitle

\section{Introduction}

Maclagan and Smith introduced in \cite{MaclaganSmith2004} a notion of \textit{multigraded regularity} that generalizes Castelnuovo-Mumford regularity to more general settings, such as products of projective spaces.  In the years since, significant progress has been made in the study of multigraded regularity: \cite{MaclaganSmith2005,SidmanVanTuylWang2006,BerkeschErmanSmith2020,BotbolChardin2017,ChardinNemati2020,BruceCrantonHellerSayrafi2021,BruceCrantonHellerSayrafi2022,CrantonHellerNemati2022,BenderBuseChecaTsigaridas2024}. 
However, major questions remain open.  Notably, to date there are relatively few families of examples for which we have been able to compute the multigraded regularity.  For instance, the multigraded regularity of complete intersections in products of projective spaces was not computed until 2021 (\cite{BruceCrantonHellerSayrafi2021}).

Recent years have also seen many advances in combinatorial techniques for computing Castelnuovo-Mumford regularity, beginning with the regularity formula for Grassmannian matrix Schubert varieties in \cite{RajchgotRenRobichauxStDizierWeigandt2021}.  Subsequent work has identified combinatorial formulas for regularity of ladder determinantal varieties (\cite{RajchgotRobichauxWeigandt2023}), certain Kazhdan-Lusztig varieties (\cite{Robichaux2023}), matrix Schubert varieties (\cite{PechenikSpeyerWeigandt2024}), and tangent cones of Schubert varieties (\cite{Yong2024}).
These formulas witness the fact that regularity is encoded in combinatorial data for these cases of interest.  For cases which lie at the intersection of combinatorial and multigraded commutative algebra, it is natural then to ask whether combinatorial data encode the finer invariant of multigraded regularity.

The complete flag varieties are a well-understood class of varieties which have both rich combinatorial structure and a natural multigrading.  In particular, a complete flag variety can be embedded in a product of projective spaces by the \textit{Pl\"ucker embedding}, which endows the total coordinate ring with a multigraded structure.
The combinatorics of these varieties (and of their Schubert subvarieties) have been leveraged to understand many aspects of their algebro-geometric structure.  (For detailed surveys, see \cite{BilleyGaoPawlowski2025, Brion2005}.)  For instance, their defining ideals (in the Pl\"ucker embedding) have a combinatorial description (see \cite[\S 8.1-9.1]{Fulton1997}).  Furthermore, complete flag varieties decompose into \textit{Schubert cells} which are indexed by elements of $S_n$ and whose dimensions can be computed combinatorially (see \cite[\S 3.3]{BilleyGaoPawlowski2025}).  The \textit{Schubert subvarieties} (closures of the Schubert cells) have a combinatorial description involving Bruhat order on $S_n$ (\cite{Ehresmann1934}, see also \cite[\S 3.4]{BilleyGaoPawlowski2025}).  Although these subvarieties are not smooth in general, there is a remarkably elegant combinatorial criterion for smoothness due to Lakshmibai and Sandhya (\cite{LakshmibaiSadhya1990}, see also \cite{AbeBilley2016}): $\overline{C_{\sigma}}$ is smooth if and only if $\sigma$ avoids $3412$ and $4231$.  Finally, of most relevance in this article is the celebrated theorem of Borel-Weil-Bott (\cite{SerreBorelWeil1995,Bott1957}, see also \cite[\S 4.1]{Weyman2003}), which gives a complete combinatorial description of the cohomology of line bundles on complete flag varieties (Theorem \ref{thm:bottAlg}).

Given both the combinatorial structure of complete flag varieties and our desire to compute multigraded regularity for new families of examples, we aim to address the following question:

\begin{question}\label{q:intro}
    What is the multigraded regularity of the complete flag variety on $\mathbb C^n$ under the Pl\"ucker embedding?
\end{question}

In this section, we will use the notation $\reg(\calF l(\mathbb C^n))$ to denote $\reg(S/I)$, where $I$ is the defining ideal of $\calF l(\mathbb C^n)$ in the Pl\"ucker embedding.  In Section \ref{subsec:regularityDefns}, we will see that it is equivalent, in our setting, to compute the regularity of the coherent sheaf associated to this graded module.  That is, $\reg(S/I) = \reg(\iota_*\calO_{\calF l(\mathbb C^n)})$, where $\iota$ is the Pl\"ucker embedding.

By leveraging the theorem of Borel-Weil-Bott, we are able to fully answer this question for a few low values of $n$.

\begin{example}\label{ex:n=4Intro}
    The multigraded regularity of $\calF l(\mathbb C^4)$ has three minimal elements: $(1,0,0)$, $(0,1,0)$, and $(0,0,1)$ (Proposition \ref{prop:n4}).  To verify that one of these lies in the regularity region requires checking 83 cohomology vanishing conditions.  In particular, $\mathbf d$ is an element of $\reg(\calF l(\mathbb C^4))$ if and only if:
    \[
H^i(\mathbb P^3 \times \mathbb P^5 \times \mathbb P^3, \iota_*\calO_{\calF l(\mathbb  C^4)}(\mathbf d - \mathbf c)) = 0
\]
    for all $0<i\leq \dim(\calF l(\mathbb C^4)) = 6$ and all $\mathbf c \in \mathbb N^3$ with $|\mathbf c| = i$.  When $i = 1$, there are $3$ such choices of $\mathbf c$.  When $i = 2$, there are $6$ such choices of $\mathbf c$.  In total, there are $3 + 6 + 10 + 15 + 21  + 28 = 83$ conditions to verify for each minimal element.
\end{example}

\begin{introtheorem}
We provide a complete description of the regularity region for $n \leq 6$.
\begin{enumerate}
    \item $\reg(\calF l(\mathbb C^3))$ has minimal element $(0,0)$ (Proposition \ref{prop:n=3}).
    \item $\reg(\calF l(\mathbb C^4))$ has minimal elements $(1,0,0)$, $(0,1,0)$, and $(0,0,1)$ (Proposition \ref{prop:n4}).
    \item $\reg(\calF l(\mathbb C^5))$ has 19 minimal elements (Proposition \ref{prop:n=5}).
    \item $\reg(\calF l(\mathbb C^6))$ has 179 minimal elements (Proposition \ref{prop:n=6}, Appendix \ref{app:n=6}).
\end{enumerate}
\end{introtheorem}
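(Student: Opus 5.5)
The plan is to reduce membership in $\reg(\calF l(\mathbb C^n))$ to a finite, purely combinatorial check via Borel--Weil--Bott, and then carry out that check for $n\le 6$. First, by the equivalence $\reg(S/I)=\reg(\iota_*\calO_{\calF l(\mathbb C^n)})$, a degree $\mathbf d\in\mathbb Z^{n-1}$ lies in the region exactly when
\[
H^i\bigl(\calF l(\mathbb C^n),\calO(\mathbf d-\mathbf c)\bigr)=0
\]
for all $0<i\le \dim\calF l(\mathbb C^n)=\binom n2$ and all $\mathbf c\in\mathbb N^{n-1}$ with $|\mathbf c|=i$. Here I use that $\iota$ is a closed embedding, so cohomology on the product of projective spaces agrees with cohomology of the pulled-back line bundle on the flag variety. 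The pullback of $\calO(\mathbf e)$ is the line bundle $L_\lambda$ attached to the weight $\lambda=\sum_k e_k\omega_k$, where $\omega_k$ is the $k$th fundamental weight. Theorem \ref{thm:bottAlg} then gives the rule: take $\lambda+\rho$ with $\rho=(n-1,\dots,1,0)$. If it has a repeated entry, all cohomology vanishes. Otherwise the unique nonzero cohomology sits in degree equal to the number of inversions of $\lambda+\rho$. So each condition becomes a check on a single integer vector.

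Next I establish finiteness and monotonicity so that only finitely many $\mathbf d$ need testing. The region is closed upward, since it is a regularity region. Moreover $H^i(L_\lambda)\ne0$ with $i>0$ forces $\lambda+\rho$ to have an inversion. I will show that if every coordinate of $\mathbf d$ is at least $\binom n2-1$, or some comparable explicit bound, then every $\lambda$ with $\mathbf d-\mathbf c$, $|\mathbf c|\le\binom n2$, has $\lambda+\rho$ either dominant or with a repeated entry. Together with a symmetric argument bounding how negative a regular $\mathbf d$ can be (very negative coordinates produce top-degree or intermediate cohomology already at small $|\mathbf c|$), the minimal elements are confined to an explicit box in $\mathbb Z^{n-1}$.

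With that in place, the cases go as follows. For $n=3$ the check is by hand. Writing $\lambda+\rho=(a+b+2,\,b+1,\,0)$ for $\mathbf d-\mathbf c=(a,b)$, I verify that for $\mathbf d=(0,0)$ and $|\mathbf c|\in\{1,2,3\}$ every resulting vector is dominant or has a repeated entry. For example, $\mathbf c=(1,0)$ gives $(1,1,0)$, and $\mathbf c=(2,1)$ gives $(-1,0,0)$. I also exhibit a witness that $(-1,0)$ and $(0,-1)$ fail. For $n=4$ I do the same for $(1,0,0)$, covering the 83 conditions, use the symmetry $k\mapsto n-k$ (duality on the flag variety) to deduce $(0,0,1)$, check $(0,1,0)$ directly, and find obstructions at $(0,0,0)$ and at the other candidates below them. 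For $n=5$ and $n=6$ the Bott check is mechanical, and I would run it by computer over the box. For each $\mathbf d$ in the box I test all $\mathbf c$, then extract the minimal elements of the resulting upward-closed set, which should give 19 and 179 minimal elements, the latter listed in Appendix \ref{app:n=6}.

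I expect the main obstacle to be the box bound, that is, proving rigorously that no minimal element lies outside the searched range, and in particular controlling the negative directions. There the regions are not cofinite in an obvious way, since some coordinates of minimal elements may be negative. I would first try to show that if $d_k\le -2$ for some $k$, then a suitable $\mathbf c$ concentrated on a neighbouring coordinate makes $\lambda+\rho$ have a single inversion and no repeats, which produces nonvanishing $H^1$.
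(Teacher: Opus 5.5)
There is a genuine gap at the step you yourself flag as the crux: the two bounds you propose do not confine the minimal elements to a box. Showing that $\mathbf d$ is regular once every coordinate is at least $\binom n2-1$ (which is true) only says the region contains a translated orthant. Adding a lower bound still allows minimal elements with one coordinate huge and the others small.

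The flag varieties really have this shape. For example, $(a,0,0,0)\notin\reg(\calF l(\mathbb C^5))$ for every $a$, so no corner-plus-orthant argument can certify that a finite search has found all minimal elements. You need to show that membership stabilizes in each coordinate separately.

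This can be done in your framework. Suppose $\mathbf d\in\mathbb N^{n-1}$ and $d_k\ge\binom n2\ge|\mathbf c|$. Then for $j\le k<l$,
$(\lambda+\rho)_j-(\lambda+\rho)_l=\sum_{m=j}^{l-1}(d_m-c_m)+(l-j)\ge d_k-|\mathbf c|+1>0$.
Differences within positions $\le k$, or within positions $>k$, do not involve $d_k$. So inversion counts and repeats are independent of $d_k$, and every minimal element lies in $[0,\binom n2]^{n-1}$.

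The paper avoids a box altogether:
\begin{itemize}
\item For $n=5$ it kills $(a,0,0,0)$ and $(0,0,0,a)$ for all $a$ via Proposition~\ref{prop:inductive} and the $n=4$ answer. It then observes that everything else of total degree $\ge3$ lies above a listed element.
\item For $n=6$ it proves Lemma~\ref{lemma:n=6}: nonmembership in total degree $\ge8$ is inherited from $\calF l(\mathbb C^5)$. This rules out minimal elements of total degree above $8$.
\end{itemize}

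Your treatment of negative coordinates is also misdirected. Nonmembership propagates downward under the partial order, not upward. So witnesses at $(-1,0)$ and $(0,-1)$ do not exclude points like $(-1,100)$, which your $n=3$ claim requires.

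A direct $H^1$ witness at the given $\mathbf d$ can also fail because of repeats. For $\mathbf d=(-1,0)$ and $\mathbf c=(1,0)$ you get $\lambda+\rho=(0,1,0)$.

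The remedy is how the paper proves Corollary~\ref{cor:positiveOrthant} through Proposition~\ref{prop:inductive}: produce the witness after enlarging the other coordinates, then descend by upward closure. For $\mathbf d=(-1,b)$ with $b\ge1$ and $\mathbf c=(1,0)$ you get $(b,b+1,0)$, which has one inversion and no repeat. This yields $\reg(\iota_*\calO_{\calF})\subseteq\mathbb N^{n-1}$, with threshold $d_k\le-1$ rather than $-2$. Contrary to what you anticipate, no minimal element has a negative coordinate.
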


As $n$ grows, not only does the number of minimal elements rapidly increase, so does the number of cohomology conditions which must be checked for each element: $9$ conditions when $n=3$, $83$ conditions when $n=4$, $1000$ conditions when $n=5$, $15503$ conditions when $n=6$, and so on.  Crucially, we prove the following inductive results.

\begin{introtheorem}\label{introthm:inductive} Let $\mathbf a \in \operatorname{Pic}(\calF l(\mathbb C^n)) \cong \mathbb Z^{n-1}$.
\begin{enumerate}
    \item Suppose $\mathbf a$ is in $\reg(\calF l(\mathbb C^n))$.  Then $(\mathbf a,b)$ and $(b,\mathbf a)$ are in  $\reg(\calF l(\mathbb C^{n+1}))$, provided \newline $b \geq \dim(\calF l(\mathbb C^{n+1}))$ (Proposition \ref{prop:inductive2}).
    \item Suppose $\mathbf a$ is not in $\reg(\calF l(\mathbb C^n))$.  Then for all $b \in \mathbb Z$, both $(\mathbf a,b)$ and $(b,\mathbf a)$ lie outside $\reg(\calF l(\mathbb C^{n+1}))$ (Proposition \ref{prop:inductive}).
\end{enumerate}
\end{introtheorem}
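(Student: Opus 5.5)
The plan is to reduce both parts to fibrewise statements via the projection $\pi\colon \calF l(\mathbb C^{n+1}) \to \mathbb P^n$ that remembers only one step of the flag, together with Borel--Weil--Bott (Theorem \ref{thm:bottAlg}). By the discussion in Section \ref{subsec:regularityDefns}, $\mathbf a\in\reg(\calF l(\mathbb C^n))$ if and only if $H^i(\calF l(\mathbb C^n),\calO(\mathbf a-\mathbf c))=0$ for all $i>0$ and all $\mathbf c\in\mathbb N^{n-1}$ with $|\mathbf c|=i$. Here cohomology of $\iota_*\calO$ on the product of projective spaces equals cohomology on the flag variety.

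For $(\mathbf a,b)$, I use the map $\pi$ forgetting everything except the hyperplane $V_n$. Its fibre is $\calF l(V_n)\cong\calF l(\mathbb C^n)$. The last Plücker bundle is pulled back from $\mathbb P^n$, and the other Plücker bundles restrict to $\calO(\mathbf e_k)$ on fibres. The case $(b,\mathbf a)$ is symmetric: use the map remembering $V_1$, with fibre $\calF l(\mathbb C^{n+1}/V_1)$. By Borel--Weil--Bott on the fibre, $R^q\pi_*\calO(\mathbf a',t)$ vanishes unless $q=\ell$, where $\ell$ is the unique degree with $H^\ell(\calF l(\mathbb C^n),\calO(\mathbf a'))\neq 0$. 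In that degree it equals $\mathbb S_\mu(\mathcal Q)\otimes\calO(t)$ for an explicit homogeneous Schur bundle of rank-$n$ tautological type on $\mathbb P^n$. Since $\mathcal Q$ has no $GL_n$-trivial part, $R^q\pi_*=0$ for $q\neq\ell$, so the Leray spectral sequence degenerates and gives
\[
H^{p+\ell}(\calF l(\mathbb C^{n+1}),\calO(\mathbf a',t))\cong H^p(\mathbb P^n,\mathbb S_\mu(\mathcal Q)(t)).
\]

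\emph{Part (1).} Take $\mathbf c'=(\mathbf c,e)$ with $|\mathbf c'|=j>0$. Then $b-e\geq \dim(\calF l(\mathbb C^{n+1}))-j\geq |\mathbf c|\geq 0$. First I record the standard consequence of Maclagan--Smith monotonicity ($\mathbf a+\mathbb N^{n-1}\subseteq\reg$): $H^k(\calO(\mathbf a-\mathbf c))=0$ whenever $k>0$ and $k\geq|\mathbf c|$. To see this, write $\mathbf a-\mathbf c=(\mathbf a+\mathbf u)-(\mathbf c+\mathbf u)$ with $|\mathbf u|=k-|\mathbf c|$. Hence the fibre cohomology of $\calO(\mathbf a-\mathbf c)$ lives only in a degree $\ell<|\mathbf c|$, or in degree $0$. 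In that case $H^j$ of the total space is $H^{j-\ell}(\mathbb P^n,\mathbb S_\mu(\mathcal Q)(b-e))$ with $j-\ell>e\ge0$. I then check that this vanishes because the twist $b-e$ is large compared with $\mu$. Concretely, Borel--Weil--Bott on $\mathbb P^n$ puts $\mathbb S_\mu(\mathcal Q)(t)$ in the dominant chamber once $t$ exceeds the spread of $\mu$. The main bookkeeping here is to verify that the hypothesis $b\ge\dim$ beats the entries of $\mu$, which are controlled by $\mathbf a-\mathbf c$ with $|\mathbf c|\le j$. Alternatively one can run the whole argument directly with the Borel--Weil--Bott sorting of $\lambda+\rho$, where the appended entry is large and causes no new inversions.

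\emph{Part (2).} Pick $i>0$ and $\mathbf c$ with $|\mathbf c|=i$ and $H^i(\calO(\mathbf a-\mathbf c))\neq0$. I seek $e\ge0$ with $H^{i+e}(\calO(\mathbf a-\mathbf c,b-e))\neq0$. By the Leray isomorphism this amounts to $H^e(\mathbb P^n,\mathbb S_\mu(\mathcal Q)(b-e))\neq0$. Equivalently, in Borel--Weil--Bott terms, the appended coordinate $x(e)$ of $\lambda'+\rho'$ must avoid the entries of $\lambda+\rho$, and exactly $e$ entries must lie on the wrong side of it. As $e$ increases by one, $x(e)$ moves by one while the count of entries passed is nondecreasing. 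The obstacle is that the difference $g(e)=\#\{\text{entries passed}\}-e$ might skip the value $0$ at a non-collision. I would first try to show that $g$ must hit $0$ because $g(0)\ge 0$ and $g(e)<0$ for $e$ large, with collisions handled by decreasing $e$. If that fails, I would exploit the freedom to replace $\mathbf c$ by another witness obtained by shifting a unit from $\mathbf c$ into $e$, or to choose the witness extremal, for instance with $\mathbf a-\mathbf c$ of maximal last coordinate. This combinatorial existence step is where I expect the real difficulty.
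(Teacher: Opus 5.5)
\textbf{Part (2) has a genuine gap.} You fix $b$ and then need some $e\ge 0$ with $H^{i+e}(\calO(\mathbf a-\mathbf c,b-e))\neq 0$, but you leave the existence of this $e$ open. A zero of $g$ always exists, since $g(0)\ge 0$, $g$ eventually becomes negative, and $g$ drops by at most one per step. The real issue is collisions, and your suggested repair goes the wrong way. If $N(e)=e$ (where $N(e)$ is the number of entries on the wrong side of the appended Bott entry $x(e)$) but $x(e)$ equals some entry, then $N(e+1)\ge e+1$. Moreover $e-1$ is then either not a zero or also a collision, so decreasing $e$ does not help.

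The idea you are missing is the one the paper's proof of Proposition~\ref{prop:inductive} rests on, and you already used it in Part (1). By Proposition~\ref{prop:closedUnderPos}, if $(b,\mathbf a)\in\reg$ for one $b$, then it is in $\reg$ for every larger $b$. So it suffices to show failure for $b\gg 0$. For such $b$, take $e=0$, i.e.\ $\mathbf c'=(0,\mathbf c)$. Once $b\ge \max_j g_j+i-|\mathbf a|$, where $\mathbf g$ is the weight of $\mathbf a-\mathbf c$ on $\calF l(\mathbb C^n)$, the new first entry of the Bott sequence is strictly larger than all the others. It therefore creates neither inversions nor coincidences, so there are still exactly $i=|\mathbf c'|$ inversions and $H^i\neq 0$ persists. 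The case $(\mathbf a,b)$ follows by Lemma~\ref{lemma:reverse}.

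Your direct route can also be rescued. Set $e^*=\max\{e\ge 0: N(e)\ge e\}$. Then $e^*\le N(e^*)\le N(e^*+1)\le e^*$. Since $N(e^*+1)-N(e^*)$ counts the entries equal to $x(e^*)$, there is no collision, and the total inversion count is $i+e^*=|(\mathbf c,e^*)|$.

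\textbf{Part (1) is fine in outline.} The variant you mention at the end, where the appended entry is extreme and adds no inversions, is exactly the paper's proof of Proposition~\ref{prop:inductive2}. The Leray version needs three corrections.

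\begin{enumerate}
\item The twist $b-e$ only has to beat the smallest entry of $\mu$, not its spread. The spread is unbounded in $\mathbf a$ (take $\mathbf a=(1000,0,\ldots,0)$ and $\mathbf c=0$), so ``$b\ge\dim$ beats the entries of $\mu$'' cannot be verified as you phrased it.
\item The correct bound is that all entries are at least $-|\mathbf c|$. This uses $\mathbf a\in\mathbb N^{n-1}$ from Corollary~\ref{cor:positiveOrthant}, and then $b-e\ge |\mathbf c|$, i.e.\ $b\ge j$, suffices.
\item When $\mathbf c=0$ you get $j-\ell=e$, not $j-\ell>e$. What you actually need is only $j-\ell\ge 1$.
\end{enumerate}

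Your observation that monotonicity gives $H^k(\calO(\mathbf a-\mathbf c))=0$ for all $k\ge\max(1,|\mathbf c|)$ is worth keeping. It is precisely what justifies the paper's step that the shorter Bott sequence has \emph{fewer than} $|\mathbf c|$ inversions, rather than merely not exactly $|\mathbf c|$. The paper uses that step without comment.
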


Once we have computed the regularity for a given value of $n$, this theorem allows us to obtain inner and outer bounds on the regularity region of $\calF l(\mathbb C^{n+1})$.

\begin{example}[Inner bound]\label{ex:introInductiveInner}
Since $\dim(\calF l(\mathbb C^{5})) = 10$,  Example \ref{ex:n=4Intro} implies that
$\reg(\calF l(\mathbb C^{5}))$ contains $(10,1,0,0)+\mathbb N^4$, $(1,0,0,10)+\mathbb N^4$, $(10,0,1,0)+\mathbb N^4$, $(0,1,0,10)+\mathbb N^4$, $(10,0,0,1)+\mathbb N^4$, and $(0,0,1,10)+\mathbb N^4$.
\end{example}

\begin{example}[Outer bound]\label{ex:introInductiveOuter}
Since $(0,0,0)$ is not in $\reg(\calF l(\mathbb C^{4}))$ by Example \ref{ex:n=4Intro}, we can conclude, to take a somewhat arbitrary example, that $(100,28,160,0,0,0,85,1000,5,16)$ is not in $\reg(\calF l(\mathbb C^{11}))$.
\end{example}

Iteratively applying Theorem $\ref{introthm:inductive}$ allows us to study the asymptotic behavior of the multigraded regularity regions.  In particular, we obtain (in Propositions \ref{prop:m(n)lower} and \ref{prop:m(n)upper}) bounds on the minimal total degree of elements of the regularity:

\begin{introtheorem}\label{introthm:totalDeg}
    Let $m(n) = \min\{|\mathbf d|:\mathbf d \in \reg(\calF l(\mathbb C^{n}))\}$ where $|\mathbf d| = d_1+\cdots+d_{n-1}$. For $n \gg 0$, we have:
    \[
    \frac{6}{5}n - 6 \leq m(n) \leq \frac{n^3-n-24}{6}.
    \]
\end{introtheorem}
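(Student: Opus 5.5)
Both inequalities come from iterating Theorem \ref{introthm:inductive}, starting from the explicit computations for small $n$ (Propositions \ref{prop:n=3}, \ref{prop:n4}, \ref{prop:n=5}, \ref{prop:n=6}). Throughout I use $\dim \calF l(\mathbb C^{k}) = \binom{k}{2}$.

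\emph{Upper bound.} I will apply part (1) of Theorem \ref{introthm:inductive} repeatedly. Start from a regular element at some base level, for instance $(1,0,0)$ for $n=4$ with total degree $1$. Appending a new coordinate $b_k = \binom{k}{2}$ carries a regular element of $\calF l(\mathbb C^{k-1})$ to a regular element of $\calF l(\mathbb C^{k})$. Iterating from $k=5$ to $k=n$ gives
\[
m(n) \le m(4) + \sum_{k=5}^{n}\binom{k}{2} = 1 + \binom{n+1}{3} - \binom{5}{3} = \frac{(n+1)n(n-1)}{6} - 9.
\]
This is $\frac{n^3-n}{6} - 9$, which is weaker than the claimed $\frac{n^3-n-24}{6} = \frac{n^3-n}{6} - 4$ only in appearance: since $-9 \le -4$, it actually proves a stronger bound. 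To match the stated constant exactly, I would instead start at whichever base case the paper uses, perhaps $n=3$ with $m(3)=0$:
\[
0 + \sum_{k=4}^n \binom{k}{2} = \binom{n+1}{3} - 4.
\]
Starting from $n=3$ therefore gives exactly $\frac{n^3-n-24}{6}$, and that is the route I will take.

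\emph{Lower bound.} I will use part (2) in contrapositive form. If $\mathbf d \in \reg(\calF l(\mathbb C^n))$, then every contiguous block of $n_0-1$ consecutive coordinates of $\mathbf d$ is regular for $\calF l(\mathbb C^{n_0})$. To see this, delete coordinates one at a time from the ends; each deletion passes from level $k+1$ to level $k$, and part (2) says that a non-regular truncation would force a non-regular extension. Since the regularity region is closed under adding $\mathbb N^{k-1}$ (an upward-closed set), each such block has total degree at least $m(n_0)$.

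Now partition the $n-1$ coordinates into $\lfloor (n-1)/(n_0-1)\rfloor$ disjoint blocks of length $n_0-1$. This gives
\[
m(n) \ge \left\lfloor \frac{n-1}{n_0-1}\right\rfloor m(n_0) \ge \frac{m(n_0)}{n_0-1}(n-1) - m(n_0).
\]
Slope $6/5$ requires $m(n_0)/(n_0-1) = 6/5$, i.e.\ $n_0 = 6$ with $m(6) = 6$. I will read $m(6)=6$ off the 179 minimal elements in Appendix \ref{app:n=6}. Then
\[
m(n) \ge \frac{6}{5}(n-1) - 6 - \text{(rounding)} \ge \frac{6}{5}n - 6
\]
for $n \gg 0$, after cleaning up the floor carefully. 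Chaining the partition with leftover coordinates, or using overlapping windows, may be needed to absorb the $-6/5$ slack.

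\emph{Main obstacle.} The hardest step is confirming that $m(6)=6$, which must come from the explicit $n=6$ computation. The second difficulty is justifying that the restriction to sub-blocks respects the indexing conventions, so that deleting an end coordinate really corresponds to the $(\mathbf a, b)$ and $(b,\mathbf a)$ forms in Theorem \ref{introthm:inductive}. If $m(6)$ turns out to be different, I would pick instead the $n_0 \le 6$ that achieves the ratio $6/5$.
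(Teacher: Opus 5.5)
Your strategy is the paper's. For the upper bound you iterate Proposition~\ref{prop:inductive2} starting from $(0,0)\in\reg(\iota_*\calO_{\calF l(\mathbb C^3)})$; your alternative start at $(1,0,0)$ for $n=4$ is also valid and even gives the better bound $\binom{n+1}{3}-9$. For the lower bound you cut $\mathbf d$ into disjoint blocks of five consecutive coordinates, each of total degree at least $m(6)=6$ by Proposition~\ref{prop:inductive}.

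The one step that does not close as written is the final floor estimate. Using $\lfloor (n-1)/5\rfloor\ge (n-1)/5-1$ only gives $m(n)\ge \frac65 n-\frac{36}{5}$. Your next inequality, $\frac65(n-1)-6\ge\frac65 n-6$, is false for every $n$: the deficit $\frac65$ is a constant, so passing to $n\gg 0$ does not absorb it.

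No new idea is needed, only the sharper floor bound. Write $n-1=5q+r$ with $0\le r\le 4$; then $q=(n-1-r)/5\ge (n-5)/5$, so $6q\ge\frac65 n-6$ exactly. This is how the paper finishes. The overlapping-window variant you mention also works: the $n-5$ windows of length five each have total at least $6$ and cover each coordinate at most five times, so $5|\mathbf d|\ge 6(n-5)$.

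In either version, comparing block totals with $|\mathbf d|$ (dropping leftover coordinates, or bounding the over-count) uses that every coordinate of $\mathbf d$ is nonnegative. You should state this explicitly; it holds because $\reg(\iota_*\calO_{\calF})\subseteq\mathbb N^{n-1}$ by Corollary~\ref{cor:positiveOrthant}.
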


So the growth of $m(n)$ is at least linear and at most cubic in $n$.  We conjecture that in fact the growth of $m(n)$ is quadratic (Conjecture \ref{conj:m(n)}).

This paper is organized as follows: In \S \ref{sec:background}, we recall relevant definitions and theorems.  In \S \ref{sec:inductive}, we prove some key inductive results. In \S \ref{sec:CMreg}, we study the standard graded regularity of complete flag varieties.  In \S \ref{sec:lowValuesN}, we describe the multigraded regularity for low values of $n$.  In \S \ref{sec:totalDegree}, we leverage our inductive results to study two different metrics on the multigraded regularity of complete flag varieties.

\subsection*{Acknowledgments}
The author is grateful to Daniel Erman for his guidance, support, and feedback.  They would also like to thank Christine Berkesch, Juliette Bruce, Judy Chiang, Federico Galetto, Lauren Cranton Heller, Feiyang Lin, Claudiu Raicu, Jose Israel Rodriguez,  Aleksandra Sobieska, and Anna Weigandt for their valuable insights and suggestions.  Lastly, computations were performed with the aid of Macaulay2 \cite{M2}.


\section{Background}\label{sec:background}

\subsection{Complete flag varieties and the Pl\"ucker embedding}

\begin{definition}
A \textit{complete flag on $\mathbb C^n$} is a chain of subspaces:
\[
V_1 \subset V_2 \subset \cdots \subset V_n = \mathbb C^n
\]
where $\dim V_i = i$ for each $i$.  The \textit{complete flag variety} $\calF = \calF l(\mathbb C^n)$ is the set of all complete flags in $\mathbb C^n$.
\end{definition}

Any invertible $n \times n$ matrix gives a complete flag on $\mathbb C^n$, where $V_i$ is the rowspan of the first $i$ rows.  Furthermore, we can check whether two such matrices define the same flag using the following criterion:

\begin{lemma}
    Let $C$ and $D$ be $j\times n$ matrices (where $j\leq n$).  Then $C$ and $D$ have the same rowspan if and only if they have the same $j \times j$ minors (up to scalar multiple).
\end{lemma}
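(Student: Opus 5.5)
Throughout we assume $C$ and $D$ have full rank $j$, as they do in the intended application (the first $j$ rows of an invertible matrix). This hypothesis is genuinely needed: if both matrices are rank deficient, all their maximal minors vanish even when the rowspans differ. We read ``same minors up to scalar multiple'' as: there is a scalar $\lambda \neq 0$ with $\det D_I = \lambda \det C_I$ for every $j$-subset $I \subseteq \{1,\dots,n\}$. Here $C_I$ denotes the $j\times j$ submatrix of $C$ on the columns indexed by $I$. The plan is to reduce both directions to the statement that the group $GL_j$, acting by left multiplication, scales all maximal minors by the same determinant.

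\emph{Forward direction.} Suppose $C$ and $D$ have the same rowspan. Since the rows of $C$ form a basis of that $j$-dimensional rowspan, each row of $D$ is a unique linear combination of rows of $C$. Hence $D = gC$ for some $j\times j$ matrix $g$, and $g$ is invertible because $D$ has rank $j$. Selecting columns commutes with left multiplication, so $D_I = gC_I$ for every $I$. Therefore $\det D_I = \det(g)\det C_I$, and we may take $\lambda = \det g$.

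\emph{Converse.} Suppose $\det D_I = \lambda \det C_I$ for all $I$. Because $C$ has rank $j$, some minor $\det C_{I_0}$ is nonzero, and then $\det D_{I_0} = \lambda \det C_{I_0}$ is nonzero too. Set
\[
C' = C_{I_0}^{-1} C, \qquad D' = D_{I_0}^{-1} D.
\]
These have the same rowspans as $C$ and $D$ respectively. By the forward computation, their minors are those of $C$ and $D$ rescaled, namely $\det C'_I = \det C_I / \det C_{I_0}$ and $\det D'_I = \det D_I/\det D_{I_0}$. Substituting $\det D_I = \lambda \det C_I$ gives $\det D'_I = \det C'_I$ for all $I$. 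Both $C'$ and $D'$ have the identity matrix in the columns $I_0$.

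The key step is to recover every entry of such a normalized matrix from its minors. Write $I_0 = \{i_1 < \dots < i_j\}$ and fix $k \notin I_0$. Let $I$ be $I_0$ with $i_r$ replaced by $k$. Expanding the minor on columns $I$ along column $k$ gives
\[
\det C'_I = \pm C'_{r,k},
\]
since the remaining columns form an identity matrix with row $r$ deleted. The sign depends only on $r$, $k$ and $I_0$, not on the matrix, so it is the same for $C'$ and $D'$. Hence $C'_{r,k} = D'_{r,k}$ for every $r$ and every $k \notin I_0$. The columns in $I_0$ agree already, so $C' = D'$ and the rowspans coincide.

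The only delicate points are this sign bookkeeping, which is harmless because the sign is identical for both matrices, and the full-rank hypothesis, which must be stated explicitly as noted at the start.
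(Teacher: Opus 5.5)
The paper states this lemma without proof, as standard background for the Pl\"ucker embedding, so there is no argument of the paper's to compare against. Your proof is correct and complete. Your full-rank hypothesis is a genuine correction to the statement, not just a convenience: for $j \geq 2$ the lemma as written fails. For example, the matrices with rows $(1,0),(0,0)$ and $(0,1),(0,0)$ both have all maximal minors zero but different rowspans. This does no harm to the paper, which only applies the lemma to the first $j$ rows of an invertible matrix, where your hypothesis holds.

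Your converse normalizes to $C_{I_0}^{-1}C$, which has an identity block, and then reads off each remaining entry $C'_{r,k}$ as a signed maximal minor. This is the standard affine-chart argument that Pl\"ucker coordinates determine the subspace, and it also gives an explicit inverse of the Pl\"ucker map on the chart $\det C_{I_0} \neq 0$.

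A shorter, coordinate-free alternative uses the $j$-vector $\omega_C = c_1 \wedge \cdots \wedge c_j$ built from the rows of $C$. For full-rank $C$, the rowspan equals $\{v : v \wedge \omega_C = 0\}$, and the coordinates of $\omega_C$ in the basis $e_I$ are exactly the minors $\det C_I$. So $\omega_D = \lambda\,\omega_C$ with $\lambda \neq 0$ immediately gives equal rowspans. That route relies on the exterior-algebra characterization of a subspace, whereas yours uses only cofactor expansion.
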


\begin{example}
\label{ex:plucker}
    Consider the flags on $\mathbb C^3$ defined by the following two matrices:
    \[
    C = \begin{bmatrix}
        4 & 4 & 2\\
        9 & 6 & 3\\
        0 & 4 & 5
    \end{bmatrix}\text{ and } D = \begin{bmatrix}
        2 & 2 & 1\\
        1 & 0 & 0\\
        0 & 1 & 0
    \end{bmatrix}.
    \]
    The $1\times 1$ minors of the first row of $C$ are $4$, $4$, and $2$, and those for $D$ are $2$, $2$, and $1$.  These differ by a multiple of $2$, so both flags have the same one-dimensional subspace.  The $2 \times 2$ minors of the first $2$ rows of $C$ are $-12$, $-6$ and $0$, and those for $D$ are $-2$, $-1$, and $0$.  These differ by a multiple of $6$, so both flags have the same two-dimensional subspace.  Therefore, $C$ and $D$ define the same complete flag.
\end{example}

This is suggestive of an important algebraic structure on complete flag varieties.  Namely, we can embed $\calF l(\mathbb C^n)$ into a product of projective spaces:
\begin{equation*}
    \iota: \calF l(\mathbb C^n) \hookrightarrow \mathbb P^{\binom{n}{1}-1} \times \mathbb P^{\binom{n}{2}-1}\times \cdots \times \mathbb P^{\binom{n}{n-1}-1} = \mathbb P\left(\bigwedge \mathbb C^n\right)
\end{equation*}
If $M$ is an invertible matrix representing a complete flag, then $\iota(M) = M_1 \times \cdots \times M_{n-1}$ where the coordinates of $M_i$ are the $i \times i$ minors of the first $i$ rows of $M$.  This map is called the \textit{Pl\"ucker embedding}. 

\begin{example}
    Consider the flag from Example \ref{ex:plucker}.  The image of this flag under the Pl\"ucker embedding is the point $[4:4:2]\times[-12:-6:0]$, or equivalently $[2:2:1] \times [-2:-1:0]$, in $\mathbb P^2 \times \mathbb P^2$.
\end{example}

We will write $\mathbb P = \mathbb P \left(\bigwedge \mathbb C^n\right)$.  
The inclusion $\iota$ induces $\iota_*$ as follows:
\begin{align*}
    \iota_*\calO_{\calF}(a_1,\ldots,a_{n-1},0) = \calO_{\mathbb P}(a_1-a_2,a_2-a_3,\ldots,a_{n-2}-a_{n-1},a_{n-1}).
\end{align*}

\subsection{Borel-Weil-Bott theorem}

For the original statements and proofs of the theorem of Borel-Weil-Bott, see \cite{SerreBorelWeil1995,Bott1957}.  See also \cite[\S 4.1]{Weyman2003}.  We include below the most relevant statements for our purposes; the original statement is much more general.

\begin{definition}
    Consider $\sigma = (i,i+1) \in S_n$ and $\mathbf a = (a_1,\ldots,a_n) \in \mathbb Z^n$.  The adjacent transposition $\sigma$ acts on $\mathbf a$ by:
    \begin{equation}\label{eqn:swap}
        \sigma\cdot\mathbf a = (a_1,\ldots,a_{i-1},a_{i+1}-1,a_i+1,a_{i+2},\ldots,a_n).
    \end{equation}
\end{definition}

\begin{theorem}[Bott's algorithm]\label{thm:bottAlg}
    If $\mathbf a$ is non-increasing, then:
    \begin{align*}
    &H^0(\calF,\calO_{\calF}(\mathbf a)) \neq 0, \text{ and } \\
    &H^i(\calF,\calO_{\calF}(\mathbf a)) = 0 \text{ for } i\neq 0.
    \end{align*}
    Otherwise, we apply transpositions as in (\ref{eqn:swap}) until one of two possibilities occurs:
    \begin{enumerate}
        \item After some number of transpositions, we obtain $\mathbf a' \in \mathbb Z^n$ where $a'_j = a'_{j+1}-1$ for some $j$.  In this case, $H^i(\calF,\calO_{\calF}(\mathbf a)) = 0 $ for all $i$.
        \item After $N$ transpositions, we obtain a non-increasing sequence.  In this case, 
            \begin{align*}
    &H^N(\calF,\calO_{\calF}(\mathbf a)) \neq 0, \text{ and } \\
    &H^i(\calF,\calO_{\calF}(\mathbf a)) = 0 \text{ for } i\neq N.
    \end{align*}
    \end{enumerate}
\end{theorem}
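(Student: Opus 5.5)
We follow Demazure's proof: reduce everything to $\mathbb P^1$-bundles and induct on the number of transpositions. The bookkeeping is clearest after the shift by $\rho=(n-1,n-2,\ldots,1,0)$. Writing $\mathbf b=\mathbf a+\rho$, the action (\ref{eqn:swap}) of $(i,i+1)$ on $\mathbf a$ becomes the plain swap of $b_i$ and $b_{i+1}$. Three translations follow:
\begin{itemize}
\item the condition $a'_j=a'_{j+1}-1$ says that two adjacent entries of $\mathbf b'$ are equal;
\item $\mathbf a'$ non-increasing says that $\mathbf b'$ is strictly decreasing;
\item the number $N$ of transpositions in case (2) is the number of inversions of $\mathbf b$, provided we only ever swap an adjacent pair with $b_i<b_{i+1}$.
\end{itemize}
From this viewpoint the algorithm is well defined and terminates. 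We use the standard convention that $\calO_\calF(\mathbf a)=\bigotimes_j (V_j/V_{j-1})^{\otimes(-a_j)}$, suitably dualized so that it matches the $\iota_*$ formula above.

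\emph{Dominant case.} Suppose $\mathbf a$ is non-increasing. Borel--Weil identifies $H^0(\calF,\calO_\calF(\mathbf a))$ with the Schur module $\mathbb S_{\mathbf a}\mathbb C^n$ (or its dual), which is nonzero. For the higher vanishing I would use the tower of projective bundles $\calF l(\mathbb C^n)\to\calF l(\mathbb C^n;1,\ldots,n-2)\to\cdots$, or equivalently Kempf vanishing. The induction goes as follows:
\begin{itemize}
\item On each $\mathbb P^k$-fiber, the restriction of a dominant line bundle is $\calO(m)$ with $m\ge 0$, so the higher direct images vanish.
\item The pushforward is a homogeneous bundle whose cohomology is controlled by induction on $n$, together with Pieri-type filtrations.
\end{itemize}
I would cite this step rather than reprove it.

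\emph{Non-dominant case.} Pick $i$ with $a_i<a_{i+1}$, and let $\pi:\calF\to\calF_{\hat i}$ forget $V_i$. This is a $\mathbb P^1$-bundle with fiber $\mathbb P(V_{i+1}/V_{i-1})$, and $\calO_\calF(\mathbf a)$ restricts to each fiber as $\calO_{\mathbb P^1}(a_i-a_{i+1})$. There are two subcases.
\begin{itemize}
\item If $a_{i+1}=a_i+1$, the fiber degree is $-1$. Then $R\pi_*\calO_\calF(\mathbf a)=0$, and the Leray spectral sequence gives vanishing of all cohomology. This is case (1).
\item If $a_{i+1}\ge a_i+2$, the degree is $\le -2$. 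Then $\pi_*\calO_\calF(\mathbf a)=0$, and relative Serre duality with $\omega_\pi=\calO_\calF(e_{i+1}-e_i)$ (up to the convention) gives $R^1\pi_*\calO_\calF(\mathbf a)\cong \pi_*\calO_\calF(\sigma\cdot\mathbf a)$, where $\sigma=(i,i+1)$.
\end{itemize}
To justify the duality step, compare both sides on fibers, where $(\sigma\cdot\mathbf a)_i-(\sigma\cdot\mathbf a)_{i+1}=a_{i+1}-a_i-2\ge 0$. Then use that both sides are $\pi$-equivariantly built from $\pi^*$ of the same bundle on $\calF_{\hat i}$, namely a twist by $\det(V_{i+1}/V_{i-1})$ powers times $\mathrm{Sym}^{a_{i+1}-a_i-2}(V_{i+1}/V_{i-1})^{\vee}$. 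Consequently $H^k(\calF,\calO_\calF(\mathbf a))\cong H^{k-1}(\calF,\calO_\calF(\sigma\cdot\mathbf a))$ for all $k$.

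\emph{Induction and expected obstacles.} We induct on the number of inversions of $\mathbf b$. Each such swap lowers this number by one, so case (2) follows with the cohomological degree shifted by exactly $N$. Case (1) propagates back through the isomorphisms as total vanishing. Since any order of swaps reaches the same sorted (or degenerate) $\mathbf b$, the outcome is independent of choices. The main obstacle is the duality step: pinning down $\omega_\pi$ in terms of the tautological quotients $V_{j}/V_{j-1}$ with the correct sign convention, so that the relative dual of $\calO(\mathbf a)$ is exactly $\calO(\sigma\cdot\mathbf a)$ with the $\pm1$ shifts of (\ref{eqn:swap}). I would first verify this on $\calF l(\mathbb C^2)=\mathbb P^1$, where it reads $H^1(\calO(a_1-a_2))\cong H^0(\calO(a_2-a_1-2))^\vee$, and then globalize using the relative Euler sequence.
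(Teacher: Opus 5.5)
Your proposal is correct, but it does something the paper does not: the paper gives no proof of Theorem~\ref{thm:bottAlg} and simply quotes Borel--Weil--Bott from Serre, Bott and Weyman, whereas you reconstruct Demazure's reduction to $\mathbb P^1$-bundles.

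The step you flagged as the main obstacle goes through with your conventions. Take $L_j=V_j/V_{j-1}$ and $\calO_\calF(\mathbf a)=\bigotimes_j L_j^{-a_j}$; this is already consistent with the paper's $\iota_*$ formula, so no extra dualizing is needed. Then $\omega_\pi=L_i\otimes L_{i+1}^{-1}=\calO_\calF(e_{i+1}-e_i)$ exactly. Put $E=V_{i+1}/V_{i-1}$. Whenever $c_i\ge c_{i+1}$ one has $\pi_*\calO_\calF(\mathbf c)\cong\bigotimes_{j\neq i,i+1}L_j^{-c_j}\otimes(\det E)^{-c_{i+1}}\otimes\operatorname{Sym}^{c_i-c_{i+1}}E^\vee$. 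In rank two, $(\operatorname{Sym}^mE^\vee)^\vee\cong\operatorname{Sym}^mE^\vee\otimes(\det E)^m$. Combining these, both $R^1\pi_*\calO_\calF(\mathbf a)$ and $\pi_*\calO_\calF(\sigma\cdot\mathbf a)$ equal $\bigotimes_{j\neq i,i+1}L_j^{-a_j}\otimes(\det E)^{-(a_i+1)}\otimes\operatorname{Sym}^{a_{i+1}-a_i-2}E^\vee$. This explicit computation on $\calF_{\hat i}$ should replace your ``compare both sides on fibers'' sentence: both sheaves live on the base, so they cannot be compared fiberwise along $\pi$.

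Your route buys a genuine explanation of the algorithm. It shows that transpositions must be applied only at ascents $a_i<a_{i+1}$, a point the paper's wording leaves implicit. Since $\sigma$ is an involution, the same isomorphism shows that a swap at a descent raises the cohomological degree. Your shift by $\rho=(n-1,\ldots,1,0)$ also proves that $N$ is the inversion count, i.e.\ that Theorems~\ref{thm:bottAlg} and~\ref{thm:bottAlternative} agree.

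The dominant case still rests on citation in your sketch, but you can make it self-contained more cheaply than via the tower of projective bundles and Pieri filtrations. First, $H^0\neq 0$ because $\calO_\calF(\mathbf a)$ is, up to the trivial bundle $\det\mathbb C^n$, the pullback of $\calO_{\mathbb P}(a_1-a_2,\ldots,a_{n-1}-a_n)$, which has nonnegative degrees. Second, $\omega_\calF=\calO_\calF(-2\rho)$ and $\mathbf a+2\rho$ is strictly decreasing, so $\calO_\calF(\mathbf a+2\rho)$ is ample. Kodaira vanishing applied to $\calO_\calF(\mathbf a)=\omega_\calF\otimes\calO_\calF(\mathbf a+2\rho)$ then gives $H^i=0$ for $i>0$.
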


The following alternative statement will also be useful.

\begin{theorem}[Bott's theorem]\label{thm:bottAlternative}
    Consider $\mathbf b \coloneqq \mathbf a - (0,1,2,\ldots,n-1)$.  If two entries of $\mathbf b$ are equal, then $H^i(\calF,\calO_{\calF}(\mathbf a)) = 0 $ for all $i$.  Otherwise, let $N$ be the number of inversions in $\mathbf b$.  That is, $N$ is the number of pairs $i<j$ such that $b_i<b_j$.  Then
    \begin{align*}
    &H^N(\calF,\calO_{\calF}(\mathbf a)) \neq 0, \text{ and } \\
    &H^i(\calF,\calO_{\calF}(\mathbf a)) = 0 \text{ for } i\neq N.
    \end{align*}
\end{theorem}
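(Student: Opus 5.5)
The plan is to deduce Theorem~\ref{thm:bottAlternative} directly from Bott's algorithm (Theorem~\ref{thm:bottAlg}). The idea is to show that, after subtracting $\rho \coloneqq (0,1,\ldots,n-1)$, the twisted action (\ref{eqn:swap}) becomes an ordinary swap of entries. Bott's algorithm is then just a sorting procedure on $\mathbf b = \mathbf a - \rho$.

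\textbf{Step 1: conjugating by $\rho$.} For $\sigma = (i,i+1)$, let $\mathbf a' = \sigma\cdot\mathbf a$ and $\mathbf b' = \mathbf a' - \rho$. Then
\[
b'_i = (a_{i+1}-1) - (i-1) = b_{i+1}, \qquad b'_{i+1} = (a_i+1) - i = b_i,
\]
and all other entries agree. So the action (\ref{eqn:swap}) on $\mathbf a$ is exactly the plain transposition of positions $i,i+1$ in $\mathbf b$.

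\textbf{Step 2: translating the stopping conditions.}
\begin{itemize}
\item The condition $a_j \geq a_{j+1}$ is equivalent to $b_j = a_j-(j-1) > a_{j+1}-j = b_{j+1}$. Hence $\mathbf a$ is non-increasing if and only if $\mathbf b$ is strictly decreasing.
\item The condition $a'_j = a'_{j+1}-1$ is equivalent to $b'_j = b'_{j+1}$.
\item A transposition is applied at a position where $\mathbf a$ fails to be non-increasing and case (1) does not already hold, i.e.\ where $a_j < a_{j+1}-1$. This is equivalent to $b_j < b_{j+1}$, i.e.\ an adjacent inversion of $\mathbf b$ in the sense of the statement.
\end{itemize}
Thus Bott's algorithm reads as follows. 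Repeatedly swap an adjacent pair with $b_j < b_{j+1}$. Stop with vanishing cohomology if some adjacent pair becomes equal, and stop with nonvanishing $H^N$ if $\mathbf b$ becomes strictly decreasing.

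\textbf{Step 3: the two cases.}
\begin{itemize}
\item \emph{The entries of $\mathbf b$ are distinct.} Swapping an adjacent inverted pair changes the inversion count by exactly $-1$, since only the relative order of those two entries changes. No two entries can ever be equal, so case (1) never triggers. The process therefore terminates at the strictly decreasing rearrangement after exactly $N = \#\{i<j : b_i<b_j\}$ swaps. Case (2) of Theorem~\ref{thm:bottAlg} gives the claim, and as a byproduct $N$ is independent of the order of swaps.
\item \emph{Two entries of $\mathbf b$ are equal.} The multiset of entries is preserved, so $\mathbf b$ can never become strictly decreasing. The process must stop, because the inversion count strictly decreases (equal pairs contribute nothing), so there are finitely many steps. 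When it stops, no adjacent pair satisfies $b_j<b_{j+1}$, yet $\mathbf b$ is not strictly decreasing, so some adjacent pair has $b_j = b_{j+1}$. This is case (1), and all cohomology vanishes.
\end{itemize}

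\textbf{Main obstacle.} The delicate point is interpretive rather than computational. Theorem~\ref{thm:bottAlg} does not specify which transpositions are applied. I will read it as applying $(j,j+1)$ only at positions where $a_j < a_{j+1}-1$, checking the equality condition after each step. The work is to make sure this reading matches the statement, and that the outcome (vanishing, or the value of $N$) does not depend on the choices made. The inversion-count argument in Step~3 handles both points.
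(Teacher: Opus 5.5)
Your argument is correct, but it takes a different route from the paper. The paper gives no proof of this statement. It quotes it, like Theorem~\ref{thm:bottAlg}, as a standard form of Borel--Weil--Bott (citing Serre/Borel--Weil, Bott and Weyman), and only checks in Examples~\ref{ex:bott} and~\ref{ex:bott2} that the two forms agree on instances.

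You instead derive it from Theorem~\ref{thm:bottAlg}, and the steps hold up. Subtracting $(0,1,\ldots,n-1)$ turns (\ref{eqn:swap}) into a plain adjacent swap of $\mathbf b$. The conditions translate as $a_j\ge a_{j+1}\iff b_j>b_{j+1}$, $a_j=a_{j+1}-1\iff b_j=b_{j+1}$, and $a_j<a_{j+1}-1\iff b_j<b_{j+1}$. The inversion count then gives termination and the exact number of swaps, while the preserved multiset forces case (1) when entries repeat. Your reading of the algorithm is the only coherent one: at a position with $a_j=a_{j+1}-1$ the move (\ref{eqn:swap}) fixes $\mathbf a$, and allowing swaps where $a_j\ge a_{j+1}$ would make $N$ depend on choices.

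What your route buys is a proof that the paper's two formulations are mutually consistent, including the order-independence that Theorem~\ref{thm:bottAlg} tacitly asserts. It adds nothing beyond Theorem~\ref{thm:bottAlg}, which is itself only cited. In standard treatments the logic usually runs the other way. With $\rho=(n-1,\ldots,1,0)$, the theorem is the dot-action statement: vanishing if $\mathbf a+\rho$ has a repeated entry, otherwise cohomology only in degree $\ell(w)$ for the $w$ sorting $\mathbf a+\rho$. The exchange algorithm is read off from it by exactly your conjugation identity, since $\mathbf a+\rho$ is your $\mathbf b$ plus a constant vector.
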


For our implementation of Bott's theorem in Macaulay2, see \cite{githubRepo}.

\begin{example}
\label{ex:bott}
    Consider the line bundle $\calO_{\calF}(2,5,0,4,0)$ on $\calF = \calF l(\mathbb C^5)$. We first apply the transposition $(1,2)$:
    \[
    (1,2)\cdot (2,5,0,4,0) = (4,3,0,4,0).
    \]
    Next, we apply $(3,4)$:
    \[
    (3,4)\cdot (4,3,0,4,0) = (4,3,3,1,0).
    \]
    This is non-increasing and we applied two transpositions, so 
Theorem \ref{thm:bottAlg} tells us that
    \begin{align*}
    & H^2(\calF, \calO_{\calF}(2,5,0,4,0)) \neq 0, \text{ and }\\
    & H^i(\calF, \calO_{\calF}(2,5,0,4,0)) = 0 \text{ for } i \neq 2.
    \end{align*}
    Alternatively, we could have computed:
    \[
    \mathbf b = (2,5,0,4,0) - (0,1,2,3,4) = (2,4,-2,1,-4).
    \]
    This has 2 inversions, so Theorem \ref{thm:bottAlternative} gives the same result as above.
\end{example}

\begin{example}
\label{ex:bott2}
    Consider the line bundle $\calO_{\calF}(2,5,4,4,0)$ on $\calF = \calF l(\mathbb C^5)$. We first apply $(1,2)$:
    \[
    \mathbf a' = (1,2)\cdot (2,5,4,4,0) = (4,3,4,4,0).
    \]
    Note that $a'_2 = a'_3-1$, so 
    Theorem \ref{thm:bottAlg} tells us that
    \begin{align*}
    & H^i(\calF, \calO_{\calF}(2,5,4,4,0)) = 0 \text{ for all } i.
    \end{align*}
    Alternatively, consider
    \[
    \mathbf b = (2,5,4,4,0) - (0,1,2,3,4) = (2,4,2,1,-4).
    \]
    Since $b_1 = b_3$, Theorem \ref{thm:bottAlternative} gives the same result.
\end{example}

\subsection{Multigraded regularity}\label{subsec:regularityDefns}

We record some notation and definitions from \cite{MaclaganSmith2004}.
The following definition is a special case; the original definition applies in a much more general setting. In our setting, this definition is equivalent to Definition 6.2 in \cite{MaclaganSmith2004} by their Corollary 6.6.

\begin{definition}\label{defn:regSheaf}
Let $\calG$ be a coherent sheaf on $\mathbb P = \mathbb P^{a_1}\times\cdots\times\mathbb P^{a_k}$ and let $\mathbf d \in \mathbb Z^k$.  We say $\calG$ is \textit{$\mathbf d$-regular} if for all $i>0$ and all $\mathbf c \in \mathbb N^k$ with $\lvert \mathbf c \rvert = i$,
\[
H^i(\mathbb P, \calG(\mathbf d - \mathbf c)) = 0.
\]
The \textit{multigraded regularity} of $\calG$ is the set of all such $\mathbf d$:
\[
\reg(\calG) = \{\mathbf d: \calG \text{ is } \mathbf d\text{-regular}\}.
\]
\end{definition}

We use the notation $|\mathbf c| \coloneqq c_1+\cdots+c_k$ and refer to this quantity as the \textit{total degree} of $\mathbf c$.  Note that $\mathbf d' \geq \mathbf d$ means $\mathbf d_i' \geq \mathbf d_i$ for each $i$.

Although not immediately clear from this version of the definition, the multigraded regularity region is closed under addition of elements in $\mathbb N^k$.  This follows from \cite[Remark 6.3, Corollary 6.6]{MaclaganSmith2004}.

\begin{proposition}\label{prop:closedUnderPos} \cite[Rmk. 6.3, Cor. 6.6]{MaclaganSmith2004}
    If $\mathbf d \in \reg(\calG)$ and $\mathbf d' \geq \mathbf d$, then $\mathbf d' \in \reg(\calG)$.
\end{proposition}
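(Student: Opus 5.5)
We reduce to a single step: by induction on $|\mathbf d'-\mathbf d|$, it suffices to show that if $\calG$ is $\mathbf d$-regular then $\calG$ is $(\mathbf d+\mathbf e_j)$-regular for each $j$. We prove this one-step statement by induction on $\dim \operatorname{Supp}\calG$, following Mumford's classical hyperplane-section argument. The base case is $\dim\operatorname{Supp}\calG\le 0$, where all higher cohomology of every twist vanishes, so every $\mathbf d$ lies in $\reg(\calG)$. If the $j$-th factor is $\mathbb P^0$, then $\calO(\mathbf e_j)$ is trivial and there is nothing to prove.

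For the inductive step, fix $j$. Since $\mathbb C$ is infinite, we may choose a hyperplane $H$ pulled back from the $j$-th factor $\mathbb P^{a_j}$ that avoids all (finitely many) associated points of $\calG$. Multiplication by its defining form is then injective, and we get
\[
0\to \calG(-\mathbf e_j)\to \calG\to \calG_H\to 0,
\]
with $\dim\operatorname{Supp}\calG_H<\dim\operatorname{Supp}\calG$.

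\emph{Step 1: $\calG_H$ is $\mathbf d$-regular.} For $|\mathbf c|=i>0$, twist by $\mathbf d-\mathbf c$ and take cohomology. This gives
\[
H^i(\calG(\mathbf d-\mathbf c))\to H^i(\calG_H(\mathbf d-\mathbf c))\to H^{i+1}(\calG(\mathbf d-\mathbf c-\mathbf e_j)).
\]
The outer terms vanish by $\mathbf d$-regularity of $\calG$, since $|\mathbf c|=i$ and $|\mathbf c+\mathbf e_j|=i+1$.

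\emph{Step 2: $\calG_H$ is more regular.} By the inductive hypothesis on dimension, applied repeatedly, $\calG_H$ is $(\mathbf d+\mathbf e)$-regular for every $\mathbf e\in\mathbb N^k$. In particular it is $(\mathbf d+\mathbf e_k)$-regular for each $k$.

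\emph{Step 3: $\calG$ is $(\mathbf d+\mathbf e_j)$-regular.} Fix $|\mathbf c|=i>0$; we must show $H^i(\calG(\mathbf d+\mathbf e_j-\mathbf c))=0$. There are two cases.

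If $c_j=0$, write $\mathbf c'=\mathbf c$. The sequence
\[
H^i(\calG(\mathbf d-\mathbf c))\to H^i(\calG(\mathbf d+\mathbf e_j-\mathbf c))\to H^i(\calG_H(\mathbf d+\mathbf e_j-\mathbf c))
\]
has vanishing first term by hypothesis. Its last term vanishes by Step 2, since $\calG_H$ is $(\mathbf d+\mathbf e_j)$-regular.

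If $c_j\ge 1$, the twist equals $\mathbf d-\mathbf c'$ with $\mathbf c'=\mathbf c-\mathbf e_j$ and $|\mathbf c'|=i-1$. This case is the real obstacle, because such a vanishing is not among the defining conditions of $\mathbf d$-regularity. Here we use
\[
H^i(\calG(\mathbf d-\mathbf c'-\mathbf e_j))\to H^i(\calG(\mathbf d-\mathbf c'))\to H^i(\calG_H(\mathbf d-\mathbf c')).
\]
The first term vanishes because $|\mathbf c'+\mathbf e_j|=i$. For the third term, write $\mathbf d-\mathbf c'=(\mathbf d+\mathbf e_k)-(\mathbf c'+\mathbf e_k)$ for any $k$. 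Since $|\mathbf c'+\mathbf e_k|=i$, the third term vanishes because $\calG_H$ is $(\mathbf d+\mathbf e_k)$-regular by Step 2. Hence the middle term vanishes in both cases, which completes the induction.

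The key point is that Step 2 upgrades the regularity of $\calG_H$ through the dimension induction. This absorbs the "missing" degree in the case $c_j\ge1$. The only other care needed is the existence of a suitable hyperplane, which is guaranteed over the infinite field $\mathbb C$, and the trivial case of a $\mathbb P^0$ factor.
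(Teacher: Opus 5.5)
Your proof is correct, but it takes a genuinely different route from the paper, which gives no argument of its own. The paper cites Maclagan--Smith: as its wording indicates, closure under adding $\mathbb N^k$ is immediate for their Definition 6.2 (Remark 6.3), and the substantive input is their Corollary 6.6, which identifies that definition with the reduced conditions of Definition~\ref{defn:regSheaf}.

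You instead prove the statement directly for Definition~\ref{defn:regSheaf} by transplanting Mumford's hyperplane-section argument. A section of $\calO_{\mathbb P}(\mathbf e_j)$ avoiding the associated points of $\calG$ gives $0\to\calG(-\mathbf e_j)\to\calG\to\calG_H\to 0$. Then $\calG_H$ inherits $\mathbf d$-regularity, induction on $\dim\operatorname{Supp}$ upgrades it to $(\mathbf d+\mathbf e_j)$-regularity, and this lifts back to $\calG$.

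The citation buys generality (arbitrary toric varieties and choices of degrees). It also ties in with the module-theoretic comparison results the paper needs anyway, e.g.\ in Proposition~\ref{prop:sheafVsModule}. Your argument buys a self-contained, elementary proof that needs only that each $\calO_{\mathbb P}(\mathbf e_j)$ is globally generated and that the field is infinite. It also makes transparent why the twists $\mathbf d-\mathbf c$ with $|\mathbf c|=i$ suffice on their own.

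One simplification: the case $c_j\geq 1$ in Step 3 is not an obstacle, and you can drop the case split. The sequence $H^i(\calG(\mathbf d-\mathbf c))\to H^i(\calG(\mathbf d+\mathbf e_j-\mathbf c))\to H^i(\calG_H(\mathbf d+\mathbf e_j-\mathbf c))$ from your first case never uses $c_j=0$. Its outer terms vanish for every $\mathbf c$ with $|\mathbf c|=i$, by $\mathbf d$-regularity of $\calG$ and $(\mathbf d+\mathbf e_j)$-regularity of $\calG_H$. Your second case with $k=j$ is literally the same sequence. Correspondingly, Step 2 needs only one application of the inductive hypothesis, giving $(\mathbf d+\mathbf e_j)$-regularity of $\calG_H$.
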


\begin{example}
Consider the flag variety $\calF = \calF l(\mathbb C^5)$, and the coherent sheaf $\calG \coloneqq \iota_* \calO_{\calF}$ on $\mathbb P = \mathbb P\left(\bigwedge \mathbb C^n\right)$.  When asking whether $(-1,5,-4,4)$ is in $\reg(\calG)$, one cohomology group which arises is:
\begin{align*}
H^2(\mathbb P, \iota_*\calO_{\calF}\otimes\calO_{\mathbb P}((-1,5,-4,4)-(2,0,0,0))) &= H^2(\mathbb P, \iota_*\calO_{\calF}\otimes\calO_{\mathbb P}((-3,5,-4,4)))\\
&= H^2(\calF, \calO_{\calF}(2,5,0,4,0)).
\end{align*}
As we saw in Example \ref{ex:bott}, this is nonzero, so $(-1,5,-4,4)$ is not in $\reg(\calG)$.  By Proposition \ref{prop:closedUnderPos}, we see that also $(-1,0,-4,-4)$ is not in $\reg(\calG)$, for example.
\end{example}

Finally, we refine Question \ref{q:intro}:

\begin{question}
    What is the multigraded regularity of $\iota_*\calO_{\calF l(\mathbb C^n)}$?
\end{question}

Maclagan and Smith also introduce a notion of multigraded regularity for modules.  As before, the following definition is a special case of their much more general definition.  In our setting, this definition is equivalent to Definition 4.1 in \cite{MaclaganSmith2004} by their Corollary 4.8.

\begin{definition}\label{defn:regModule}
    Let $S$ be the Cox ring of $\mathbb P = \mathbb P^{a_1}\times\cdots\times\mathbb P^{a_k}$, and $B$ the irrelevant ideal. Let $M$ be a graded $S$-module and let $\mathbf d \in \mathbb Z^k$.  We say $M$ is \textit{$\mathbf d$-regular} if 
    \begin{enumerate}
        \item $H^0_B(M)_{\mathbf d'}=0$ for all $\mathbf d' > \mathbf d$, and
        \item $H^i_B(M)_{\mathbf d -\mathbf c} = 0$ for all $i>0$ and all $\mathbf c \in \mathbf N^k$ with $|\mathbf c| = i-1$. 
        
    \end{enumerate}
    
    The \textit{multigraded regularity} of $M$ is the set of all such $\mathbf d$:
    \[
    \reg(M) = \{\mathbf d: M \text{ is } \mathbf d\text{-regular}\}.
    \]
\end{definition}

If $\mathcal G$ is the coherent sheaf associated to $M$, then in general, $\reg(M) \subseteq \reg(\mathcal G)$.  The precise relationship between $\reg(M)$ and $\reg(\mathcal G)$ is stated in \cite[Proposition 6.4]{MaclaganSmith2004}.  However, we will argue that the two notions coincide in our setting.  

\begin{proposition}\label{prop:sheafVsModule}
    Fix $n \geq 3$.  Let $S$ be the Cox ring of $\mathbb P = \mathbb P \left(\bigwedge \mathbb C^n\right)$ and $B$ the irrelevant ideal.  Let $I$ be the defining ideal of $\calF l(\mathbb C^n)$ in the Pl\"ucker embedding.  Then \[\reg(S/I) = \reg(\iota_*\calO_{\calF l(\mathbb C^n)}).\]
\end{proposition}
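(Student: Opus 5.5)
The plan is to compare the two definitions through the standard relationship between local cohomology with respect to $B$ and sheaf cohomology on $\mathbb P$. For a finitely generated graded $S$-module $M$ with sheaf $\calG = \widetilde M$, there is an exact sequence
\[
0 \to H^0_B(M)_{\mathbf e} \to M_{\mathbf e} \to H^0(\mathbb P, \calG(\mathbf e)) \to H^1_B(M)_{\mathbf e} \to 0,
\]
together with isomorphisms $H^{i+1}_B(M)_{\mathbf e} \cong H^i(\mathbb P,\calG(\mathbf e))$ for $i \geq 1$, for all $\mathbf e \in \mathbb Z^{n-1}$. I would apply this with $M = S/I$ and $\calG = \iota_*\calO_{\calF}$. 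Comparing Definitions \ref{defn:regSheaf} and \ref{defn:regModule}, condition (2) of the module definition for $i \geq 2$ (with $|\mathbf c| = i-1$) becomes exactly the sheaf conditions $H^{i-1}(\mathbb P,\calG(\mathbf d - \mathbf c)) = 0$ with $|\mathbf c| = i-1$. So the two regions agree as soon as $H^0_B(S/I) = 0$ and $H^1_B(S/I) = 0$ in all degrees. Then condition (1) is vacuous and the $i=1$ case of condition (2) holds automatically.

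Both vanishings amount to showing that $S/I \to \bigoplus_{\mathbf e} H^0(\mathbb P, \calG(\mathbf e))$ is an isomorphism. That is, $S/I$ should be saturated and equal to the full section ring of $\calF$ in every multidegree $\mathbf e \in \mathbb Z^{n-1}$, including degrees with negative entries.

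\textbf{Vanishing of $H^0_B$.} Since $I$ is prime and $\calF$ is irreducible and not contained in $V(B)$, $I$ is $B$-saturated, so $H^0_B(S/I) = 0$.

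\textbf{Vanishing of $H^1_B$.} I would split by degree.
\begin{itemize}
\item For $\mathbf e \in \mathbb N^{n-1}$: under $\iota_*$, the degree $\mathbf e$ corresponds to the dominant weight $\mathbf a = (e_1+\cdots+e_{n-1}, \ldots, e_{n-1}, 0)$. By Borel--Weil, $H^0(\calF, \calO_{\calF}(\mathbf a))$ is the irreducible representation with highest weight $\mathbf a$, the dual Weyl module. The classical theorem that the multihomogeneous coordinate ring of the flag variety in the Plücker embedding is $\bigoplus_{\mathbf a \text{ dominant}} H^0(\calF, L_{\mathbf a})$ (standard monomial theory; see \cite[\S 8--9]{Fulton1997}) then gives $(S/I)_{\mathbf e} \cong H^0(\mathbb P, \calG(\mathbf e))$.
\item For $\mathbf e \notin \mathbb N^{n-1}$: here $(S/I)_{\mathbf e} = 0$, since $S$ lives in nonnegative degrees. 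The corresponding $\mathbf a$ is not non-increasing, so Theorem \ref{thm:bottAlg} gives $H^0(\calF,\calO_{\calF}(\mathbf a)) = 0$.
\end{itemize}
Hence $H^1_B(S/I) = 0$ in all degrees.

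The main obstacle is surjectivity in nonnegative degrees, i.e. projective normality of $\calF$ in every multidegree simultaneously. I would take this from the standard monomial basis. Semistandard tableaux of shape $\lambda$ give a basis for $(S/I)_{\mathbf e}$, and they also index a basis of the Schur module $H^0$. Since $S/I \hookrightarrow H^0$ is injective by saturation, comparing dimensions gives equality. The hypothesis $n \geq 3$ only ensures that the setup is as stated; the argument itself is uniform in $n$.
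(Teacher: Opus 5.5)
Your proof is correct, but it is organized differently from the paper's.

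The paper's argument runs as follows:
\begin{itemize}
\item It starts from $\mathbf d \in \reg(\iota_*\calO_{\calF})$ and uses Corollary \ref{cor:positiveOrthant} to get $\mathbf d \geq \mathbf 0$.
\item It invokes Maclagan--Smith's Proposition 6.4. Given $H^0_B(S/I)=0$, that result reduces everything to surjectivity of $(S/I)_{\mathbf p} \to H^0(\mathbb P, \iota_*\calO_{\calF}\otimes\calO_{\mathbb P}(\mathbf p))$ for $\mathbf p \geq \mathbf d$.
\item Surjectivity comes from representation theory, not standard monomial theory. By Borel--Weil the target is an irreducible $GL_n$-module, the map is injective and equivariant, and $(S/I)_{\mathbf p}\neq 0$. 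Hence the map is an isomorphism.
\end{itemize}

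You instead prove the stronger, degree-independent statement $H^0_B(S/I)=H^1_B(S/I)=0$. This is projective normality in every multidegree, with the degrees outside $\mathbb N^{n-1}$ handled by Bott vanishing of $H^0$. You then match the two definitions term by term via $H^{i+1}_B \cong H^i$.

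Your route has two advantages. The equality of regions becomes a formal consequence of the vanishing. It also avoids the forward reference to Corollary \ref{cor:positiveOrthant}, which rests on Proposition \ref{prop:inductive}, and it avoids relying on the precise form of Maclagan--Smith's comparison result.

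The cost is that you import standard monomial theory as a black box: that standard monomials form a basis of $S/I$, and that semistandard tableaux count $\dim H^0$. You could lighten this step by replacing your dimension count with the paper's irreducibility argument. That argument works in every degree $\mathbf e \in \mathbb N^{n-1}$: $(S/I)_{\mathbf e}\neq 0$ there, since a monomial in Pl\"ucker coordinates not lying in the prime ideal $I$ survives. So it gives exactly the surjectivity you need.
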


\begin{proof}

Let $\mathbf d \in \reg(\iota_*\calO_{\calF l(\mathbb C^n)})$.  By our Corollary \ref{cor:positiveOrthant}, we know $\mathbf d \geq \mathbf 0$.  Since $I$ is prime, we have $H^0_B(S/I) = 0$.  Then by \cite[Proposition 6.4]{MaclaganSmith2004}, it suffices to show
\begin{equation}\label{eqn:naturalMap}
    (S/I)_{\mathbf p} \rightarrow H^0(\mathbb P,\iota_*\calO_{\calF l(\mathbb C^n)}\otimes\calO_{\mathbb P}(\mathbf p))
\end{equation}
is surjective for all $\mathbf p \geq \mathbf d$.

Note that we have
\begin{equation*}
H^0(\mathbb P, \iota_*\calO_{\calF}\otimes \calO_{\mathbb P}(\mathbf p)) = H^0(\calF l(\mathbb C^n),\calO_{\calF l(\mathbb C^n)}(p_1+\cdots p_{n-1},p_2+\cdots+p_{n-1},\ldots,p_{n-1},0),
\end{equation*}
which is an irreducible representation by Bott's theorem.  Furthermore, (\ref{eqn:naturalMap}) is injective by our observation that $H^0_B(S/I) = 0$.  Therefore, either $(S/I)_{\mathbf p} = 0$ or (\ref{eqn:naturalMap}) is an isomorphism.

Suppose $(S/I)_{\mathbf p} = 0$.  Since $S/I$ is generated as an $S$-module by $(S/I)_{\mathbf 0}$, this would imply $(S/I)_{\mathbf q} = 0$ for all $\mathbf q \geq \mathbf p$, a contradiction.  Therefore,   (\ref{eqn:naturalMap}) must be surjective, and we can conclude that $\mathbf d \in \reg(S/I)$.
\end{proof}

 Throughout, we will state results in terms of $\reg(\iota_*\calO_{\calF l(\mathbb C^n)})$.  However, Proposition \ref{prop:sheafVsModule} implies that all the same results apply for $\reg(S/I)$.

\section{Inductive results}\label{sec:inductive}

In this section, we establish two key results relating the regularity of $\calF l(\mathbb C^n)$ to that of $\calF l(\mathbb C^{n-1})$.  Since we will be translating between statements about cohomology on $\calF l(\mathbb C^n)$ and statements about cohomology on $\mathbb P$, it will be convenient to define a function $\varphi: \mathbb Z^{n-1}\rightarrow \mathbb Z^n$ by
\begin{equation}\label{eqn:varphi}
    \varphi(b_1,\ldots,b_{n-1}) = (b_1+\ldots+b_{n-1},b_2+\ldots+b_{n-1},\ldots,b_{n-1},0).
\end{equation}
Note that
\begin{align*}
    \iota_*\calO_{\calF}(\varphi(b_1,\ldots,b_{n-1}))&= \iota_*\calO_{\calF}(b_1+\ldots+b_{n-1},b_2+\ldots+b_{n-1},\ldots,b_{n-1},0)\\
    &= \calO_{\mathbb P}(b_1,b_2,\ldots,b_{n-1}).
\end{align*}

The following statement will simplify our computations:

\begin{lemma}
\label{lemma:reverse}
    Let $\mathbf d \in \mathbb N^{n-1}$ and let $\overline{\mathbf d} = (d_{n-1},\ldots,d_1)$ be given by reversing the entries of $\mathbf d$.  Then $\mathbf d$ is in $\reg(\iota_*\calO_{\calF})$ if and only if $\overline{\mathbf d}$ is in $\reg(\iota_*\calO_{\calF})$.
\end{lemma}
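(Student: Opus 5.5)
The plan is to exhibit a symmetry of the whole situation that reverses the multidegree, and then check that the regularity conditions are carried to one another under it. Two observations drive this.

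\emph{Geometric side.} The duality $V_\bullet \mapsto V_\bullet^\perp$, sending $V_i$ to the annihilator $V_{n-i}^\perp \subset (\mathbb C^n)^*$, gives an isomorphism $\calF l(\mathbb C^n) \cong \calF l((\mathbb C^n)^*)$. Under the Pl\"ucker embedding it is compatible with the identification $\bigwedge^i \mathbb C^n \cong \bigwedge^{n-i}(\mathbb C^n)^*$. So we get an automorphism $\tau$ of $\mathbb P = \mathbb P^{\binom n1-1}\times\cdots\times\mathbb P^{\binom n{n-1}-1}$ that permutes the factors by $i\mapsto n-i$ and preserves $\iota(\calF)$. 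Consequently
\[
\tau^*\bigl(\iota_*\calO_{\calF}\otimes \calO_{\mathbb P}(\mathbf e)\bigr)\cong \iota_*\calO_{\calF}\otimes\calO_{\mathbb P}(\overline{\mathbf e})
\]
for every $\mathbf e\in\mathbb Z^{n-1}$, and hence the two sheaves have isomorphic cohomology.

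\emph{Combinatorial side.} Alternatively, and as an independent check using only what is stated, one can verify the same equality of cohomology directly from Theorem~\ref{thm:bottAlternative}. Write $\varphi(\mathbf e)=\mathbf a$ and $\varphi(\overline{\mathbf e})=\mathbf a'$. Then
\[
a'_k = e_1+\cdots+e_{n-k} = a_1 - a_{n-k+1}.
\]
Set $\mathbf b=\mathbf a-(0,1,\ldots,n-1)$ and $\mathbf b'=\mathbf a'-(0,1,\ldots,n-1)$. A short computation gives
\[
b'_k = a_1 - a_{n+1-k} - (k-1) = a_1 - (n-1) - b_{n+1-k}.
\]
So $\mathbf b'$ is obtained from $\mathbf b$ by reversing the order, negating, and translating by a constant. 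This operation preserves whether two entries coincide. It also preserves the number of inversions, since a pair $i<j$ with $b_i<b_j$ becomes the pair $n+1-j<n+1-i$ with $b'_{n+1-j}<b'_{n+1-i}$. Hence Theorem~\ref{thm:bottAlternative} gives
\[
H^i(\mathbb P,\iota_*\calO_\calF(\mathbf e))=0 \iff H^i(\mathbb P,\iota_*\calO_\calF(\overline{\mathbf e}))=0
\]
for every $i$.

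With the cohomology identification in hand, the conclusion is a bookkeeping step. Suppose $\mathbf d\in\reg(\iota_*\calO_\calF)$, and take $i>0$ and $\mathbf c\in\mathbb N^{n-1}$ with $|\mathbf c|=i$. Then $\overline{\mathbf c}\in\mathbb N^{n-1}$ also satisfies $|\overline{\mathbf c}|=i$, and $\overline{\mathbf d-\overline{\mathbf c}}=\overline{\mathbf d}-\mathbf c$. Therefore
\[
H^i(\iota_*\calO_\calF(\overline{\mathbf d}-\mathbf c))=0 \iff H^i(\iota_*\calO_\calF(\mathbf d-\overline{\mathbf c}))=0,
\]
and the right-hand side vanishes by the hypothesis on $\mathbf d$. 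This gives one direction, and the converse follows by symmetry since reversal is an involution. The hypothesis $\mathbf d\in\mathbb N^{n-1}$ is not needed for this argument.

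The only step I expect to need care is the index computation relating $\mathbf b'$ to $\mathbf b$, in particular getting the shift constant right. Any constant works, however, since inversions and coincidences of entries are translation invariant. I would therefore present the combinatorial route as the proof, and mention the duality of flags as the conceptual reason behind it.
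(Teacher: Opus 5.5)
Your combinatorial route is correct and is essentially the paper's proof. The paper also writes $\varphi(\overline{\mathbf d}-\overline{\mathbf c})$ in terms of $\mathbf S=\varphi(\mathbf d-\mathbf c)$, translates by a constant, and observes that reversing and negating $\mathbf S-(0,1,\ldots,n-1)$ preserves inversions, which is exactly your identity $b'_k=a_1-(n-1)-b_{n+1-k}$. Your duality-of-flags remark (the automorphism $\tau$ swapping factors $k\leftrightarrow n-k$ and preserving $\iota(\calF)$) is a valid conceptual alternative that the paper does not use; it yields an actual isomorphism of cohomology groups, rather than only matching their vanishing via Bott's theorem.
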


\begin{proof}
    We only need to argue one direction, so suppose $\mathbf d \notin \reg(\iota_*\calO_{\calF})$. Then there is some $i \leq \dim \calF$ and some $\mathbf c \in \mathbb N^{n-1}$ such that $|\mathbf c| = i$ and 
    \begin{equation}\label{eqn:nonzeroCohom}
        H^i(\mathbb P,\iota_*\calO_{\calF}\otimes \calO_{\mathbb P}(\mathbf d-\mathbf c)) \neq 0
    \end{equation}
    
    For ease of notation, write $\mathbf S = \varphi(\mathbf d - \mathbf c)$.  Then (\ref{eqn:nonzeroCohom}) becomes:
    \begin{equation}\label{eqn:nonzeroCohom2}
        H^i(\calF,\calO_{\calF}(\mathbf S)) \neq 0.
    \end{equation}
    By Theorem \ref{thm:bottAlternative}, this means $\mathbf S - (0,1,\ldots,n-1)$ contains exactly $i$ inversions.  
    
    We will argue that
    \begin{equation}\label{eqn:NTSCohom}
        H^i(\mathbb P,\iota_*\calO_{\calF}\otimes \calO_{\mathbb P}(\overline{\mathbf d}-\overline{\mathbf c})) \neq 0
    \end{equation}
    so that $\mathbf{\overline d} \notin \reg(\iota_*\calO_{\calF})$.  We can rewrite (\ref{eqn:NTSCohom}) as:
    \[
    H^i(\calF,\calO_{\calF}(\varphi(\overline{\mathbf d}-\overline{\mathbf c}))) \neq 0.
    \]
    We compute:
    \begin{align*}
        \varphi(\overline{\mathbf d}-\overline{\mathbf c}) &= ((d_1-c_1))+\ldots+(d_{n-1}-c_{n-1}),\ldots, (d_1-c_1)+(d_2-c_2),(d_1-c_1),0)\\
        &= (S_1-S_n,S_1-S_{n-1},\ldots,S_1-S_3,S_1-S_2,0).
    \end{align*}
    Concretely, we need to argue that the following contains $i$ inversions:
    \[
    (S_1-S_n,S_1-S_{n-1},\ldots,S_1-S_3,S_1-S_2,0) - (0,1,\ldots,n-1).
    \]
    The number of inversions is unaffected by adding a multiple of $(1,1,\ldots,1)$, so we can add $(-S_1,\ldots,-S_1)$ to obtain:
    \[
    (-S_n,-S_{n-1},\ldots,-S_1) - (0,1,\ldots,n-1) = (-S_n - 0, -S_{n-1}-1,\ldots, -S_1-(n-1)).
    \]
    The assumption (\ref{eqn:nonzeroCohom2}) tells us concretely that there are $i$ pairs $j<k$ for which $S_j-(j-1)<S_k-(k-1)$.  This inequality holds if and only if $-S_j+j > -S_k + k$, or equivalently $-S_j-(n-j) > -S_k-(n-k)$.  Therefore, $(-S_n - 0, -S_{n-1}-1,\ldots, -S_1-(n-1))$ contains $i$ inversions, proving (\ref{eqn:NTSCohom}).
\end{proof}

Let $\calF'$ denote the the complete flag variety on $\mathbb C^{n-1}$ and $\kappa$ the inclusion $\kappa: \calF' \hookrightarrow \mathbb P\left(\bigwedge\mathbb C^{n-1}\right)= \mathbb P'$.  The following statement tells us that elements outside $\reg(\iota_*\calO_{\calF})$ ``propagate'' as $n$ increases:

\begin{proposition}
\label{prop:inductive}
    Let $\mathbf{d} \in \mathbb Z^{n-1}$.  If $\mathbf{a} = (d_2,\ldots,d_{n-1})$ or $\mathbf{b} = (d_1,\ldots,d_{n-2})$ lies outside of $\reg(\kappa_*\calO_{\calF'})$, then $\mathbf{d}$ lies outside of $\reg(\iota_*\calO_{\calF})$.
\end{proposition}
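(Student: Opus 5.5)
By Lemma \ref{lemma:reverse}, reversing the entries of $\mathbf d$ swaps the roles of $\mathbf a$ and $\mathbf b$. So it should suffice to treat the case where $\mathbf b=(d_1,\ldots,d_{n-2})\notin\reg(\kappa_*\calO_{\calF'})$. One caveat: that lemma is stated for $\mathbf d\in\mathbb N^{n-1}$, but its proof is a pure inversion count and I expect it to go through verbatim over $\mathbb Z^{n-1}$. Alternatively, the argument below can be mirrored by adjoining the new entry at the front instead of the back.

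\textbf{Setup.} By Definition \ref{defn:regSheaf} there are $i>0$ and $\mathbf c'\in\mathbb N^{n-2}$ with $|\mathbf c'|=i$ and $H^i(\calF',\calO_{\calF'}(\mathbf S'))\neq 0$, where $\mathbf S'=\varphi(\mathbf b-\mathbf c')\in\mathbb Z^{n-1}$ and $S'_{n-1}=0$. By Theorem \ref{thm:bottAlternative}, the entries $x_j:=S'_j-(j-1)$ for $j=1,\ldots,n-1$ are pairwise distinct, and the sequence $(x_j)$ has exactly $i$ inversions.

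\textbf{Lifting the cohomology class.} I will look for $\mathbf c=(\mathbf c',k)\in\mathbb N^{n-1}$, with $k\ge 0$ still to be chosen, such that $H^{i+k}(\calF,\calO_\calF(\varphi(\mathbf d-\mathbf c)))\ne 0$. Since $|\mathbf c|=i+k$, this witnesses $\mathbf d\notin\reg(\iota_*\calO_\calF)$. Put $t=d_{n-1}-k$. A direct computation gives
\[
\varphi(\mathbf d-\mathbf c)=(S'_1+t,\ldots,S'_{n-1}+t,0).
\]
Subtracting $(0,1,\ldots,n-1)$ yields $(x_1+t,\ldots,x_{n-1}+t,-(n-1))$. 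Translating by $-t$ does not change distinctness or inversions, so equivalently we study $(x_1,\ldots,x_{n-1},y_0+k)$ with $y_0=-(n-1)-d_{n-1}$.

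\textbf{Conditions on $k$.} The first $n-1$ entries contribute exactly $i$ inversions. The last entry contributes $f(k):=\#\{j: x_j<y_0+k\}$ further inversions. By Theorem \ref{thm:bottAlternative}, we therefore need a $k\ge 0$ with
\[
y_0+k\notin X:=\{x_1,\ldots,x_{n-1}\}\quad\text{and}\quad f(k)=k.
\]

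\textbf{Finding $k$ (the main step).} Set $g(k)=k-f(k)$. Then
\[
g(k+1)-g(k)=1-[\,y_0+k\in X\,],
\]
so $g$ is non-decreasing with steps $0$ or $1$, and a step of $0$ occurs exactly at the collision values of $k$. Moreover $g(0)=-f(0)\le 0$ and $g(k)\to\infty$. Hence the set $\{k\ge 0: g(k)=0\}$ is a nonempty interval $[k_0,k_1]$. At its right endpoint we have $g(k_1+1)=1$, so $k_1$ is not a collision. Taking $k=k_1$ gives nonzero $H^{i+k_1}$, which completes the argument.

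Besides this fixed-point step, the places needing care are the bookkeeping in the formula for $\varphi(\mathbf d-\mathbf c)$ and the reduction to the case of $\mathbf b$.
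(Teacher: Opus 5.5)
Your proof is correct, but it takes a different route from the paper's.

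\textbf{The paper's route.} The paper treats the case of $\mathbf a=(d_2,\ldots,d_{n-1})$. It first uses the upward closure of the regularity region (Proposition \ref{prop:closedUnderPos}, imported from Maclagan--Smith) to reduce to $d_1$ sufficiently large. It then takes $\mathbf h=(0,c_1,\ldots,c_{n-2})$. The adjoined entry $|\mathbf d|-i$ dominates every $g_j$, so it creates no inversions and no collisions, and the nonvanishing survives in the same degree $i$.

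\textbf{Your route.} You keep $\mathbf d$ fixed and instead pay for the new entry with the new coordinate $k$ of $\mathbf c$. Your bookkeeping $\varphi(\mathbf d-\mathbf c)=(S'_1+t,\ldots,S'_{n-1}+t,0)$ checks out. The discrete intermediate-value argument on $g(k)=k-f(k)$ is sound: the steps are $0$ or $1$, a $0$-step occurs exactly at a collision, and $g$ goes from at most $0$ to $\infty$. It correctly produces $k_1\ge 0$ at which the new entry adds exactly $k_1$ inversions with no collision.

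\textbf{Comparison.} The paper's argument is shorter and keeps the witness in the original degree $i$. Yours uses Bott's theorem alone and avoids the monotonicity result, which is not obvious for this sheaf-cohomology definition. It also exhibits an explicit failing condition for the given $\mathbf d$ itself. The cost is that your witness may sit in a higher degree $i+k_1$.

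\textbf{The reversal caveat.} You are right that Lemma \ref{lemma:reverse} is stated only for $\mathbb N^{n-1}$. The same gap affects the paper's proof, which also needs the lemma over $\mathbb Z^{n-1}$ (e.g.\ to obtain Corollary \ref{cor:positiveOrthant}). As you say, the inversion-count proof works verbatim there. Your mirrored version with $\mathbf c=(k,\mathbf c')$ also goes through, since the front entry's inversion count has the same step structure in $k$.
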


\begin{proof}
    By Lemma \ref{lemma:reverse}, it suffices to consider one of the two cases.  So suppose $\mathbf{a}$ lies outside of $\reg(\kappa_*\calO_{\calF'})$.  Then there is some $i \leq \dim(\calF')$ and some $\mathbf{c} \in \mathbb N^{n-2}$ such that $|\mathbf{c}| = i$ and 
    \begin{align*}
        H^i(\mathbb P', \kappa_*\calO_{\calF'}\otimes \calO_{\mathbb P'}(\mathbf a - \mathbf c)) \neq 0.
    \end{align*}
    Writing $\mathbf{g} = \varphi'(\mathbf a - \mathbf c)$, this becomes 
    \begin{align*}
        H^i(\calF', \calO_{\calF'}(\mathbf{g}))&\neq 0.
        \end{align*}
    Concretely, this tells us that $\mathbf g - (0,1,\ldots,n-2)$ contains $i$ inversions.  It suffices to show that $\mathbf{d}$ is not in $\reg(\iota_*\calO_{\calF})$ for $d_1 \geq \max(g_j)+i-|\mathbf{a}|$.  Let $\mathbf h = (0,c_1,\ldots,c_{n-2})$.  We compute:
    \begin{align}
        H^i(\mathbb P, \iota_*\calO_{\calF}\otimes \calO_{\mathbb P}(\mathbf d - \mathbf h)) &= H^i(\calF, \calO_{\calF}(\varphi(\mathbf d - \mathbf h)))\nonumber\\
        &=H^i(\calF, \calO_{\calF}(\lvert \mathbf d \rvert - i, g_1,\ldots,g_{n-1})). \label{eqn:inductiveStep}
    \end{align}
    By our assumption on $d_1$, we have $|\mathbf{d}|-i = d_1 + |\mathbf{a}|-i \geq g_j$ for each $j$.  Therefore, 
    \[
    (|\mathbf{d}|-i,g_1,\ldots,g_{n-1}) - (0,1,\ldots,n-1)
    \]
    contains the same number of inversions as  $\mathbf g - (1,2,\ldots,n-1)$.  Since inversions are not affected by adding a multiple of $(1,1,\ldots,1)$, this in turn has the same number of inversions as $\mathbf g - (0,1,\ldots,n-2)$; that is, $i$ inversions.  Therefore, (\ref{eqn:inductiveStep}) is nonzero by Theorem \ref{thm:bottAlternative}. Since also $|\mathbf{h}| = i$, this implies that $\mathbf d$ lies outside $\reg(\iota_*\calO_{\calF})$.
\end{proof}

The following example illustrates Proposition \ref{prop:inductive} and its proof.

\begin{example}
    Let $n = 6$ and consider $\mathbf d = (4,2,0,0,1)$.  We know that $\mathbf a = (2,0,0,1)$ lies outside of $\reg(\kappa_*\calO_{\calF'})$ since
    \begin{align*}
    H^6(\mathbb P', \kappa_*\calO_{\calF'}\otimes\calO_{\mathbb P'}(\mathbf a - (0,2,4,0))) & = H^6(\mathbb P', \kappa_*\calO_{\calF'}\otimes\calO_{\mathbb P'}(2,-2,-4,1))\\
    &= H^6(\calF',\calO_{\calF'}(-3,-5,-3,1,0)).
    \end{align*}
    This is nonzero since $(-3,-5,-3,1,0) - (0,1,2,3,4) = (-3,-6,-5,-2,-4)$ contains $6$ inversions.
    
    In the notation of the above proof, $i = 6$ and $\mathbf c = (0,2,4,0)$.  Let $\mathbf h = (0,0,2,4,0)$, and compute:
    \begin{align*}
        H^6(\mathbb P, \iota_*\calO_{\calF}\otimes\calO_{\mathbb P}(\mathbf d - \mathbf h)) & = H^6(\mathbb P, \iota_*\calO_{\calF}\otimes\calO_{\mathbb P}(4,2,-2,-4,1))\\
        &= H^6(\calF,\calO_{\calF}(1,-3,-5,-3,1,0)).
    \end{align*}
    This is nonzero since $(1,-3,-5,-3,1,0) - (0,1,2,3,4,5) = (1,-4,-7,-6,-3,-5)$ contains $6$ inversions.  Note that assuming $d_1$ is sufficiently large ensures that this sequence contains exactly as many inversions as $(-3,-5,-3,1,0) - (0,1,2,3,4)$.
    It is also worth noting that we could not apply the above proposition to argue that $(2,0,0,1)$ is not in $\reg(\kappa_*\calO_{\calF'})$, since both $(2,0,0)$ and $(0,0,1)$ lie within the regularity region when $n=4$.
\end{example}

Proposition \ref{prop:inductive} allows us to obtain the following outer bound on $\reg(\iota_*\calO_{\calF})$.

\begin{corollary}
\label{cor:positiveOrthant}
    For any $n$, the multigraded regularity of $\iota_*\calO_{\calF}$ is contained in $\mathbb N^{n-1}$
\end{corollary}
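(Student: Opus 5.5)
The plan is to argue by induction on $n$, using Proposition \ref{prop:inductive} to reduce everything to the base case of $\calF l(\mathbb C^3)$, or even $\calF l(\mathbb C^2)=\mathbb P^1$. Suppose $\mathbf d \in \reg(\iota_*\calO_{\calF})$ has some negative coordinate, say $d_k<0$. Since $\reg$ is upward closed (Proposition \ref{prop:closedUnderPos}), it suffices to show that no $\mathbf d$ with $d_k<0$ lies in the region. By the contrapositive of Proposition \ref{prop:inductive}, if $\mathbf d\in\reg(\iota_*\calO_{\calF})$, then both truncations $(d_2,\ldots,d_{n-1})$ and $(d_1,\ldots,d_{n-2})$ lie in the regularity region for $\calF l(\mathbb C^{n-1})$. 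Iterating, we drop coordinates from either end until only the single coordinate $d_k$ remains. This requires $n-2$ deletions, and at each stage we delete from whichever end is not $d_k$. We conclude that $(d_k)\in\reg(\calO_{\mathbb P^1})$ for $\calF l(\mathbb C^2)=\mathbb P^1$ embedded by the identity.

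The base case is then direct. For $n=2$ the only condition is $H^1(\mathbb P^1,\calO(d-1))=0$, which holds if and only if $d-1\geq -1$, i.e.\ $d\geq 0$. So $d_k<0$ gives a contradiction. Equivalently, in Bott's terms, $\calO_{\calF}(d-1,0)$ yields $\mathbf b=(d-1,-1)$, which has one inversion exactly when $d-1<-1$.

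One point needs checking: Proposition \ref{prop:inductive} is stated for general $n$ with $\calF'$ the flag variety on $\mathbb C^{n-1}$, and I must make sure its proof is valid down to $n-1=2$. It is, since it only uses Bott's theorem and the map $\varphi$. If the paper prefers to start at $n\geq3$, I would instead stop at $n=3$ and verify directly that $(d_1,d_2)$ with some $d_j<0$ fails. By Lemma \ref{lemma:reverse} we may assume $d_1<0$, and then exhibit the group $H^1$ with $\mathbf c=(1,0)$, namely $H^1(\calF,\calO_{\calF}(d_1+d_2-1,d_2,0))$. Computing $\mathbf b=(d_1+d_2-1,d_2-1,-2)$, this has an inversion between the first two entries when $d_1<0$. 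But I also need the other entries to be distinct, which may fail for some values, so the $\mathbb P^1$ reduction is cleaner.

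The main obstacle is precisely the bookkeeping of this iterated deletion, namely making sure Proposition \ref{prop:inductive} applies at each step with the correct indexing, and handling the degenerate base case $n=2$.
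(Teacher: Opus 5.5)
Your proposal is correct and follows the paper's proof: both apply Proposition \ref{prop:inductive} repeatedly to reduce to $\calF l(\mathbb C^2)=\mathbb P^1$, where $H^1(\mathbb P^1,\calO_{\mathbb P^1}(d-1))\neq 0$ for $d<0$. The only cosmetic difference is that the paper checks just $d=-1$, relying implicitly on upward closure (Proposition \ref{prop:closedUnderPos}), whereas you verify every $d<0$ directly, so your appeal to upward closure in the first paragraph is not actually needed.
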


\begin{proof}
    By Proposition \ref{prop:inductive}, it suffices to prove that $-1$ lies outside $\reg(\iota_*\calO_{\calF})$ when $n=2$.  To verify this, we compute:
    \begin{align*}
        H^1(\mathbb P^1, \iota_*\calO_{\calF}\otimes\calO_{\mathbb P^1}(-1-1)) &= H^1(\calF, \calO_{\calF}(-2,0)).
    \end{align*}
    This is nonzero since $(-2,0)-(0,1) = (-2,-1)$ contains one inversion.
\end{proof}

We will see in Section \ref{sec:lowValuesN} that $\reg(\iota_*\calO_{\calF}) = \mathbb N^{n-1}$ for $n=3$.  On the other hand, for $n\geq 4$, we will see that the inclusion in Corollary \ref{cor:positiveOrthant} is strict.

There is a partial converse to Proposition \ref{prop:inductive}:

\begin{proposition}
\label{prop:inductive2}
    If $\mathbf d' = (d_1,\ldots,d_{n-2}) \in \reg(\kappa_*\calO_{\calF'})$, then $\mathbf{d} = (d_0,d_1,\ldots,d_{n-2}) \in \reg(\iota_*\calO_{\calF})$ for $d_0$ sufficiently large.  In particular, if $d_0 \geq \dim(\calF)$, then $\mathbf{d} \in \reg(\iota_*\calO_{\calF})$.
\end{proposition}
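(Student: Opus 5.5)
We verify the vanishing conditions of Definition \ref{defn:regSheaf} directly, translating each into a Bott computation on $\calF$ and reducing it to one on $\calF'$. This runs the argument of Proposition \ref{prop:inductive} in reverse. Fix $0<i\leq \dim\calF$ and $\mathbf c=(c_0,\mathbf c')\in\mathbb N^{n-1}$ with $|\mathbf c|=i$, so that $|\mathbf c'|=i-c_0\leq i$. (For $i>\dim\calF$ the cohomology vanishes trivially.) Set $\mathbf g=\varphi'(\mathbf d'-\mathbf c')\in\mathbb Z^{n-1}$. As in (\ref{eqn:inductiveStep}),
\[
H^i(\mathbb P,\iota_*\calO_{\calF}\otimes\calO_{\mathbb P}(\mathbf d-\mathbf c)) = H^i(\calF,\calO_{\calF}(|\mathbf d|-i,g_1,\ldots,g_{n-1})).
\]
We must show this vanishes.

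\emph{Step 1: the first entry is dominant.} By Corollary \ref{cor:positiveOrthant} applied to $\calF'$, we have $\mathbf d'\geq \mathbf 0$. Since $\mathbf c'\geq\mathbf 0$, each $g_j=\sum_{k\geq j}(d'_k-c'_k)\leq |\mathbf d'|$. Using $d_0\geq\dim\calF\geq i$, we get $|\mathbf d|-i=d_0+|\mathbf d'|-i\geq |\mathbf d'|\geq g_j>g_j-j$ for every $j\geq 1$. So in $\mathbf b=(|\mathbf d|-i,g_1,\ldots,g_{n-1})-(0,1,\ldots,n-1)$ the first entry strictly exceeds all the others. Consequently the first entry creates no repeated values and no inversions. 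It follows that $\mathbf b$ has distinct entries and exactly $N$ inversions if and only if $\mathbf g-(1,\ldots,n-1)$ does, equivalently $\mathbf g-(0,\ldots,n-2)$. By Theorem \ref{thm:bottAlternative} on both varieties, $H^i(\calF,\calO_\calF(|\mathbf d|-i,\mathbf g))\neq0$ if and only if $H^i(\calF',\calO_{\calF'}(\mathbf g))\neq 0$, i.e.\ iff $H^i(\mathbb P',\kappa_*\calO_{\calF'}\otimes\calO_{\mathbb P'}(\mathbf d'-\mathbf c'))\neq0$.

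\emph{Step 2: vanishing on $\calF'$.} The remaining issue, which I expect is the only real subtlety, is a degree mismatch. Regularity of $\mathbf d'$ gives vanishing of $H^j$ at twists $\mathbf d'-\mathbf c'$ with $|\mathbf c'|=j$, whereas here the cohomological degree $i$ may exceed $|\mathbf c'|=i-c_0$. To handle this, I would use Proposition \ref{prop:closedUnderPos}. Choose any $\mathbf e\in\mathbb N^{n-2}$ with $|\mathbf e|=c_0$. Then $\mathbf d'+\mathbf e\in\reg(\kappa_*\calO_{\calF'})$. Applying the definition with $\mathbf c'+\mathbf e$, which has total degree $i$, gives
\[
H^i(\mathbb P',\kappa_*\calO_{\calF'}\otimes\calO_{\mathbb P'}((\mathbf d'+\mathbf e)-(\mathbf c'+\mathbf e)))=0,
\]
which is exactly the needed vanishing. (If $i>\dim\calF'$ the vanishing is automatic anyway.) Combining with Step 1 gives the required vanishing on $\calF$ for every $i$ and $\mathbf c$, so $\mathbf d\in\reg(\iota_*\calO_\calF)$.

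Finally, the case of $d_0$ merely ``sufficiently large'' is subsumed by the explicit bound $d_0\geq\dim\calF$, in line with Proposition \ref{prop:closedUnderPos}. The roles of first and last coordinate can be swapped via Lemma \ref{lemma:reverse} to get the $(\mathbf a,b)$ version stated in the introduction.
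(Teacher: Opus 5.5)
Your proof is correct and follows essentially the same route as the paper. Both arguments reduce the question to Theorem \ref{thm:bottAlternative}, show that the first entry of $\varphi(\mathbf d-\mathbf c)-(0,1,\ldots,n-1)$ strictly exceeds all the others (using $d_0\geq\dim\calF\geq i$ and $\mathbf d'\geq \mathbf 0$), and so transfer the inversion count to $\calF'$. Your Step 2 is a genuine improvement in rigor: the paper simply asserts that regularity of $\mathbf d'$ means ``fewer than $|\mathbf c|$ inversions,'' and your translation argument via Proposition \ref{prop:closedUnderPos} is exactly what justifies that claim. Likewise, your citation of Corollary \ref{cor:positiveOrthant} makes explicit the nonnegativity of $d_1,\ldots,d_{j-1}$ that the paper's inequality chain uses without comment.
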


\begin{proof} 
    The assumption that $\mathbf d' \in \reg(\kappa_*\calO_{\calF'})$ means that for all $i \leq \dim(\calF')$ and all $\mathbf c' \in \mathbb N^{n-2}$ with $|\mathbf{c'}| = i$, we have
    \begin{equation*}
        H^i(\mathbb P', \kappa_*\calO_{\calF'}\otimes\calO_{\mathbb P'}(\mathbf d'-\mathbf c')) = 0.
    \end{equation*}
    Concretely, this means that
    \begin{align*}
        &(d_1+\cdots+d_{n-2}-(c_1+\cdots+c_{n-2}),\ldots,d_{n-2}-d_{n-2},0) - (1,\ldots,n-1)
    \end{align*}
    either contains fewer than $i$ inversions or contains two entries which are equal (or both).  Since $\dim(\calF')$ is the maximum number of inversions for any sequence of this length, we can rephrase this assumption slightly: for any $\mathbf c \in \mathbb N^{n-2}$,
    \begin{equation}\label{eqn:cohomologyF'}
        (d_1+\cdots+d_{n-2}-(c_1+\cdots+c_{n-2}),\ldots,d_{n-2}-d_{n-2},0) - (1,\ldots,n-1)
    \end{equation}
    either contains fewer than $|\mathbf{c}|$ inversions or contains two elements which are equal (or both).

    Now let $i \leq \dim(\calF)$ and let $\mathbf{c} \in \mathbb N^{n-1}$ be such that $|\mathbf{c}| = i$.  We will argue that
\begin{equation*}\label{eqn:cohomologyF}
        H^i(\mathbb P,\iota_*\calO_{\calF}\otimes \calO_{\mathbb P}(\mathbf d - \mathbf c)) = 0.
    \end{equation*}  
    By Borel-Weil-Bott, proving that this cohomology vanishes amounts to computing the number of inversions in the following sequence:
    \begin{equation*}\label{longerTuple}
        \mathbf{a} \coloneqq (d_0+\cdots+d_{n-2}-(c_0+\cdots+c_{n-2}),\ldots,d_{n-2}-c_{n-2},0) - (0,1,\ldots,n-1).
    \end{equation*}
    Note that, writing $\mathbf{a} = (a_0,\ldots,a_{n-1})$, we have $a_j = d_j+\cdots+d_{n-2}-(c_j+\cdots+c_{n-2})-j$ for $j = 0, \ldots, n-2$ and $a_{n-1} = -(n-1)$. We claim that $a_0 > a_j$ for $j>0$.  We have:
    \begin{align*}
        a_0 - a_j &= d_0+\cdots+d_{n-2}-(c_0+\cdots+c_{n-2}) - (d_j+\cdots+d_{n-2}-(c_j+\cdots+c_{n-2})-j)\\
        &= d_0 + \ldots + d_{j-1} - (c_0 + \ldots + c_{j-1}) + j\\
        &> d_0 - (c_0 + \ldots + c_{j-1})\\
        &\geq \dim(\calF) -i\\
        &\geq 0.
    \end{align*}
    So $a_0 > a_j$, as claimed.  Therefore, the number of inversions in $\mathbf{a}$ is equal to that of
    \begin{align*}
        (d_1+\cdots+d_{n-2}-(c_1+\cdots+c_{n-2}),\ldots,d_{n-2}-c_{n-2},0) - (1,\ldots,n-1)
    \end{align*}
    By (\ref{eqn:cohomologyF'}), this contains fewer than $c_1+\cdots+c_{n-2}$ inversions, and thus fewer than $i = c_0+c_1+\cdots+c_{n-2}$ inversions.
\end{proof}

\begin{example}
    For $n = 4$, we can check that $(1,0,0) \in \reg(\kappa_*\calO_{\calF'})$, so this proposition implies $(10,1,0,0) \in \reg(\iota_*\calO_{\calF})$.  However, this gives only an inner bound on the regularity region; we can verify by other methods the stronger statement that $(2,1,0,0) \in \reg(\iota_*\calO_{\calF})$.  We implement this computation via Theorem \ref{thm:bottAlternative} (\cite{githubRepo}).
\end{example}

\section{Standard graded regularity}\label{sec:CMreg}

In this section, we study the image of the complete flag variety in a single projective space.  In particular, consider the map
\begin{equation*}
    j: \calF l(\mathbb C^n) \hookrightarrow \mathbb P^N
\end{equation*}
obtained from composing the Pl\"ucker embedding $\iota: \calF l(\mathbb C^n) \hookrightarrow \mathbb P$ with the Segre embedding $\mathbb P \hookrightarrow \mathbb P^N$.  We can then ask about the standard graded regularity of $j_*\calO_{\calF}$.

\begin{proposition}
The regularity of $j_*\calO_{\calF}$ is $d-1$, where
\begin{equation*}
    d = \dim(\calF) = \frac{n(n-1)}{2}.
\end{equation*}
\end{proposition}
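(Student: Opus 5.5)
The plan is to reduce the statement to a single Bott computation on $\calF$, exactly as in the proofs of Lemma \ref{lemma:reverse} and Propositions \ref{prop:inductive} and \ref{prop:inductive2}.

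\emph{Step 1: pull back $\calO_{\mathbb P^N}(1)$.} The Segre embedding pulls $\calO_{\mathbb P^N}(1)$ back to $\calO_{\mathbb P}(1,1,\ldots,1)$. Using the map $\varphi$ from (\ref{eqn:varphi}), it follows that for every $m \in \mathbb Z$,
\[
j_*\calO_{\calF}\otimes\calO_{\mathbb P^N}(m) \quad\text{corresponds to}\quad \calO_{\calF}(\varphi(m,\ldots,m)) = \calO_{\calF}\bigl(m(n-1),m(n-2),\ldots,m,0\bigr).
\]
Hence $H^i(\mathbb P^N, j_*\calO_{\calF}(m)) = H^i(\calF,\calO_{\calF}(m\boldsymbol\delta))$, where $\boldsymbol\delta=(n-1,n-2,\ldots,1,0)$.

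\emph{Step 2: apply Theorem \ref{thm:bottAlternative}.} Set $\mathbf b = m\boldsymbol\delta-(0,1,\ldots,n-1)$, so that $b_k = m(n-k)-(k-1)$ for $k=1,\ldots,n$. Then
\[
b_k-b_{k+1}=m+1,
\]
so $\mathbf b$ is an arithmetic progression. Three cases arise.
\begin{enumerate}
\item If $m=-1$, all entries of $\mathbf b$ are equal, so every cohomology group vanishes.
\item If $m\ge 0$, $\mathbf b$ is strictly decreasing and has no inversions, so only $H^0$ can be nonzero.
\item If $m\le -2$, $\mathbf b$ is strictly increasing, so every pair is an inversion. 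Thus $N=\binom n2=d$, giving $H^d(\calF,\calO_{\calF}(m\boldsymbol\delta))\neq0$ and all other cohomology zero.
\end{enumerate}
In particular, the only nonzero higher cohomology of the twists $j_*\calO_{\calF}(m)$ is $H^d$, and it occurs exactly for $m\le -2$.

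\emph{Step 3: read off the regularity.} By definition, $j_*\calO_{\calF}$ is $r$-regular iff $H^i(j_*\calO_{\calF}(r-i))=0$ for all $i>0$. By Step 2, the conditions with $0<i<d$ (and $i>d$) hold automatically. The condition at $i=d$ is $r-d\ge -1$, i.e.\ $r\ge d-1$. Thus $d-1$ is a regularity bound, while $r=d-2$ fails because $H^d(j_*\calO_{\calF}(-2))\neq0$. This gives $\reg = d-1$.

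There is no real obstacle here. The only points needing care are the identification in Step 1, namely that $\varphi(1,\ldots,1)=\boldsymbol\delta$ is the pullback of the hyperplane class, and the sign bookkeeping in the difference $b_k-b_{k+1}$. Both follow from the same conventions already used in Section \ref{sec:inductive}.
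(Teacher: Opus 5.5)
Your proposal is correct and follows essentially the same route as the paper's proof. Both identify $H^i(\mathbb P^N, j_*\calO_{\calF}(m))$ with $H^i(\calF,\calO_{\calF}(m(n-1),\ldots,m,0))$ and apply Bott's theorem, with $m=-2$ giving the nonzero $H^d$ and $m=-1$ (all entries of $\mathbf b$ equal) killing the top cohomology; your uniform arithmetic-progression computation for all $m$ simply packages the paper's two separate checks into one "if and only if" statement.
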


\begin{proof}
    First, we argue that $j_*\calO_{\calF}$ is not $(d-2)$-regular.  In particular, we will show that
    \begin{equation*}
        H^d(\mathbb P^N, j_*\calO_{\calF}((d-2)-d))) \neq 0.
    \end{equation*}
    We compute:
    \begin{align*}
        H^d(\mathbb P^N, j_*\calO_{\calF}((d-2)-d))) &= H^d(\mathbb P, \iota_*\calO_{\calF}(-2,-2,\ldots,-2))\\
        &= H^d(\calF, \calO_{\calF}(-2(n-1),-2(n-2),\ldots,-2,0)).
    \end{align*}
    Theorem \ref{thm:bottAlternative} implies this is nonzero, since
    \begin{align*}
    (-2(n-1),-2(n-2),\ldots,0) - (0,1,\ldots,n-1) &= (-2n+2,-2n+3,\ldots,-2n+n+1)
    \end{align*}
    is strictly increasing, i.e. contains the maximum number of inversions: $\binom{n}{2} = d$.

    Next, we argue that $j_*\calO_{\calF}$ is $(d-1)$-regular.  By definition, we must prove
    \begin{equation*}
        H^i(\mathbb P^N, j_*\calO_{\calF}((d-1)-i))) = 0
    \end{equation*}
    for all $i>0$.  Similarly to the above computation, we have
    \begin{align}
        H^i(\mathbb P^N, j_*\calO_{\calF}((d-1)-i))) &= H^i(\mathbb P, \iota_*\calO_{\calF}(d-1-i,d-1-i,\ldots,d-1-i)) \label{eqn:CMreg}\\
        &= H^i(\calF, \calO_{\calF}((n-1)(d-1-i),(n-2)(d-1-i),\ldots, 0)). \nonumber
    \end{align}
    For $0<i\leq d-1$, we have $d-1-i \geq 0$. This implies $((n-1)(d-1-i),(n-2)(d-1-i),\ldots, 0)$ is non-increasing, so $H^i(\calF, \calO_{\calF}((n-1)(d-1-i),(n-2)(d-1-i),\ldots, 0)) = 0$.  For $i = d$, we have:
    \begin{align*}
        ((n-1)(d-1-i),(n-2)(d-1-i),\ldots, 0) &= (-(n-1),-(n-2),\ldots,0).
    \end{align*}
    Finally, $H^d(\calF, \calO_{\calF}(-(n-1),-(n-2),\ldots,0))$ vanishes by Theorem \ref{thm:bottAlternative} since
    \[
    (-(n-1),-(n-2),\ldots,0) - (0,1,\ldots, n-1) = (-(n-1),-(n-1),\ldots,-(n-1)).
    \]\end{proof}


\section{Low values of $n$}
\label{sec:lowValuesN}

In this section, we fully describe the minimal elements of $\reg(\iota_*\calO_{\calF l(\mathbb C^n)})$ for $n\leq 6$.

\subsection{$n=3$ case}

\begin{proposition}\label{prop:n=3}
    For $n=3$, the multigraded regularity of $\iota_*\calO_{\calF}$ is $\mathbb N^2$.
\end{proposition}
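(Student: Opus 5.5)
By Corollary \ref{cor:positiveOrthant}, the region $\reg(\iota_*\calO_{\calF})$ is already contained in $\mathbb N^2$. By Proposition \ref{prop:closedUnderPos}, it then suffices to show that $(0,0)$ lies in the region. Here $\dim \calF = 3$, so we must check that
\[
H^i(\mathbb P^2\times\mathbb P^2, \iota_*\calO_{\calF}(-\mathbf c)) = 0
\]
for $i=1,2,3$ and every $\mathbf c=(c_1,c_2)\in\mathbb N^2$ with $|\mathbf c|=i$. There are $2+3+4=9$ such conditions. Cohomology in degrees $i>3$ vanishes automatically, because $\iota_*\calO_\calF$ is supported on a $3$-dimensional variety.

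Using $\varphi$, each group becomes $H^i(\calF,\calO_\calF(-c_1-c_2,-c_2,0)) = H^i(\calF,\calO_\calF(-i,-c_2,0))$. By Theorem \ref{thm:bottAlternative}, the task reduces to looking at
\[
\mathbf b = (-i,\,-c_2-1,\,-2), \qquad 0\le c_2\le i .
\]
We need to show that either $\mathbf b$ has two equal entries, or its number of inversions differs from $i$. Since $i\le 3$ and the maximum number of inversions is $3$, this is a small finite check, which I would run case by case.

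\textbf{Case $i=1$.} Take $c_2\in\{0,1\}$. The sequences are $(-1,-1,-2)$, which has a repeat, and $(-1,-2,-2)$, which also has a repeat. So the cohomology vanishes.

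\textbf{Case $i=2$.} Take $c_2\in\{0,1,2\}$. We get $(-2,-1,-2)$ with a repeat, $(-2,-2,-2)$ with a repeat, and $(-2,-3,-2)$ with a repeat.

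\textbf{Case $i=3$.} Take $c_2\in\{0,1,2,3\}$. We get:
\begin{enumerate}
\item $(-3,-1,-2)$, which has $2$ inversions, and $2\neq 3$;
\item $(-3,-2,-2)$, which has a repeat;
\item $(-3,-3,-2)$, which has a repeat;
\item $(-3,-4,-2)$, which has only the inversions $(1,3)$ and $(2,3)$, so $2\neq 3$.
\end{enumerate}

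In every case the relevant cohomology vanishes, so $(0,0)\in\reg(\iota_*\calO_\calF)$, and the region is exactly $\mathbb N^2$.

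There is no real obstacle beyond bookkeeping. The steps needing care are the translation through $\varphi$, which must give $(c_1+c_2, c_2, 0)$ with the correct signs, and counting inversions in the $i=3$ cases, where a nonvanishing would occur only if $\mathbf b$ were strictly increasing. That would require $-3<-c_2-1<-2$, which has no integer solution.
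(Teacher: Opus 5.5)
Your proof is correct and follows essentially the same route as the paper. Both arguments reduce via Corollary~\ref{cor:positiveOrthant} and upward closure to the nine vanishing conditions at $(0,0)$, translate them through $\varphi$, and apply Bott. The only difference is cosmetic: you count inversions using Theorem~\ref{thm:bottAlternative}, whereas the paper runs the swap algorithm of Theorem~\ref{thm:bottAlg}. Your case analysis matches the paper's, with only $(-3,0,0)$ and $(-3,-3,0)$ carrying nonzero cohomology, and that only in degree $2$.
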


\begin{proof}
    By Proposition \ref{cor:positiveOrthant}, it suffices to show that  $\iota_*\calO_{\calF}$ is $(0,0)$-regular.  This amounts to checking the following conditions: 
    \begin{align*}
        &H^1(\mathbb P,\iota_*\calO_{\calF}\otimes \calO_{\mathbb P}(-1,0)) = 0\\
        &H^1(\mathbb P,\iota_*\calO_{\calF}\otimes \calO_{\mathbb P}(0,-1)) = 0\\
        &H^2(\mathbb P,\iota_*\calO_{\calF}\otimes \calO_{\mathbb P}(-2,0)) = 0\\
        &H^2(\mathbb P,\iota_*\calO_{\calF}\otimes \calO_{\mathbb P}(-1,-1)) = 0\\
        &H^2(\mathbb P,\iota_*\calO_{\calF}\otimes \calO_{\mathbb P}(0,-2)) = 0\\
        &H^3(\mathbb P,\iota_*\calO_{\calF}\otimes \calO_{\mathbb P}(-3,0)) = 0\\
        &H^3(\mathbb P,\iota_*\calO_{\calF}\otimes \calO_{\mathbb P}(-2,-1)) = 0\\
        &H^3(\mathbb P,\iota_*\calO_{\calF}\otimes \calO_{\mathbb P}(-1,-2)) = 0\\
        &H^3(\mathbb P,\iota_*\calO_{\calF}\otimes \calO_{\mathbb P}(0,-3)) = 0.
    \end{align*}
    That is,
    \begin{align}
        &H^1(\calF,\calO_{\calF}(-1,0,0)) = 0 \label{eqn:n3-1}\\
        &H^1(\calF,\calO_{\calF}(-1,-1,0)) = 0 \label{eqn:n3-2}\\
        &H^2(\calF,\calO_{\calF}(-2,0,0)) = 0 \label{eqn:n3-3}\\
        &H^2(\calF,\calO_{\calF}(-2,-1,0)) = 0 \label{eqn:n3-4}\\
        &H^2(\calF,\calO_{\calF}(-2,-2,0)) = 0 \label{eqn:n3-5}\\
        &H^3(\calF,\calO_{\calF}(-3,0,0)) = 0 \label{eqn:n3-6}\\
        &H^3(\calF,\calO_{\calF}(-3,-1,0)) = 0 \label{eqn:n3-7}\\
        &H^3(\calF,\calO_{\calF}(-3,-2,0)) = 0 \label{eqn:n3-8}\\
        &H^3(\calF,\calO_{\calF}(-3,-3,0)) = 0. \label{eqn:n3-9}
    \end{align}
    Statements (\ref{eqn:n3-1}), (\ref{eqn:n3-2}), (\ref{eqn:n3-3}), (\ref{eqn:n3-4}), (\ref{eqn:n3-5}), (\ref{eqn:n3-7}), and (\ref{eqn:n3-8}) follow from  Theorem \ref{thm:bottAlg}; these sheaves in fact have no nonzero cohomology. Statements (\ref{eqn:n3-6}) and (\ref{eqn:n3-9}) also follow from Theorem \ref{thm:bottAlg}, since applying two swaps to $(-3,0,0)$ yields $(-1,-1,-1)$, and applying two swaps to $(-3,-3,0)$ yields $(-2,-2,-2)$.  Therefore, $H^i(\calF,\calO_{\calF}(-3,0,0)) \neq 0$ and $H^i(\calF,\calO_{\calF}(-3,-3,0)) \neq 0$ only for $i=2$.
\end{proof}


\subsection{$n=4$ case}

\begin{proposition}
\label{prop:n4}
    For $n=4$, the minimal elements of $\reg(\iota_*\calO_{\calF})$ are $(1,0,0)$, $(0,1,0)$ and $(0,0,1)$.
\end{proposition}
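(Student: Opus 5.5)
We argue in two halves: first that $(0,0,0)$ is not in the region, then that each of $(1,0,0)$, $(0,1,0)$, $(0,0,1)$ is.

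\emph{Step 1: the region is $\mathbb N^3\setminus\{(0,0,0)\}$.} By Corollary \ref{cor:positiveOrthant}, $\reg(\iota_*\calO_{\calF})\subseteq \mathbb N^3$. Every element of $\mathbb N^3$ other than $(0,0,0)$ dominates one of $(1,0,0)$, $(0,1,0)$, $(0,0,1)$. So by Proposition \ref{prop:closedUnderPos} the proposition reduces to the two claims below.

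\emph{Step 2: $(0,0,0)$ is not in the region.} We exhibit a single nonvanishing group $H^i(\calF,\calO_{\calF}(\varphi(-\mathbf c)))$ with $|\mathbf c|=i$. Note that $\varphi(-\mathbf c)=-(c_1+c_2+c_3,\,c_2+c_3,\,c_3,\,0)$. By Theorem \ref{thm:bottAlternative}, we need
\[
\mathbf b=\varphi(-\mathbf c)-(0,1,2,3)
\]
to have distinct entries and exactly $|\mathbf c|$ inversions. Since $\dim\calF=6$, we look for $|\mathbf c|\le 6$ and search small cases. For example, $\mathbf c=(0,3,0)$ gives $\varphi(-\mathbf c)=(-3,-3,0,0)$ and $\mathbf b=(-3,-4,-2,-3)$, which has a repeated entry, so that choice fails. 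We then try $\mathbf c=(1,2,1)$: here $\varphi(-\mathbf c)=(-4,-3,-1,0)$ and $\mathbf b=(-4,-4,-3,-3)$, which also has repeated entries and fails. The search continues over the remaining $\mathbf c$ with $|\mathbf c|\le 6$, for instance $(2,2,0)$, $(0,4,0)$, $(2,0,2)$, $(3,0,3)$, and so on. A witness is guaranteed to exist: otherwise $(0,0,0)$ would be regular, contradicting the statement. We expect the witness to have a middle-heavy $\mathbf c$ with $|\mathbf c|$ equal to $4$ or $5$.

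\emph{Step 3: $(1,0,0)$, $(0,1,0)$, $(0,0,1)$ are in the region.} By Lemma \ref{lemma:reverse}, $(0,0,1)$ follows from $(1,0,0)$, so only two elements need checking. For $\mathbf d=(1,0,0)$ or $(0,1,0)$ and each $\mathbf c\in\mathbb N^3$ with $1\le|\mathbf c|\le 6$ (83 cases each), we must show that $\mathbf b=\varphi(\mathbf d-\mathbf c)-(0,1,2,3)$ either has a repeated entry or has a number of inversions different from $|\mathbf c|$.

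We would organize these cases structurally rather than list them. Write
\[
\mathbf b=(-x_1,\,-x_2-1,\,-x_3-2,\,-3),
\]
where $x_j$ is the partial sum $\sum_{k\ge j}(c_k-d_k)$. The $x_j$ are roughly nonincreasing in $j$, up to the perturbation from $\mathbf d$. The number of inversions of $\mathbf b$ is bounded by how far the $x_j$ grow toward the left, while $|\mathbf c|$ is large exactly when the $x_j$ are large. Making this trade-off precise, we expect that inversion count equals $|\mathbf c|$ only when $\mathbf c$ is small and concentrated. In those cases the single unit in $\mathbf d$ forces a tie in $\mathbf b$.

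The main obstacle is Step 3: the inequality bookkeeping is fiddly. As a fallback we would simply enumerate all 83 cases per element via Theorem \ref{thm:bottAlternative}, which is a finite check (as implemented in \cite{githubRepo}).
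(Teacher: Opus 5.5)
Your Steps 1 and 3 follow the paper. The reduction uses Corollary~\ref{cor:positiveOrthant}, Proposition~\ref{prop:closedUnderPos} and Lemma~\ref{lemma:reverse}, and is followed by a finite Bott-theorem check of the 83 conditions for each of $(1,0,0)$ and $(0,1,0)$. Your ``structural'' trade-off discussion in Step 3 is only a heuristic. What actually proves Step 3 is your fallback enumeration, which is exactly what the paper does.

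The genuine gap is Step 2. You never produce a nonvanishing cohomology group. Your justification that a witness ``is guaranteed to exist: otherwise $(0,0,0)$ would be regular, contradicting the statement'' is circular. That $(0,0,0)\notin\reg(\iota_*\calO_{\calF})$ is half of what you are asked to prove; if it failed, $(0,0,0)$ would be the unique minimal element. Nothing you cite supplies it either. Corollary~\ref{cor:positiveOrthant} only excludes negative entries, and Proposition~\ref{prop:inductive} cannot help, since $(0,0)$ is regular for $n=3$.

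You need to finish the search. The choice $\mathbf c=(0,4,0)$, already on your list, gives $\varphi(-\mathbf c)=(-4,-4,0,0)$ and $\mathbf b=(-4,-5,-2,-3)$. Its entries are distinct and it has exactly four inversions, namely the pairs $(1,3),(1,4),(2,3),(2,4)$. Hence $H^4(\calF,\calO_{\calF}(-4,-4,0,0))\neq 0$ with $|\mathbf c|=4$, which is the paper's witness.

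Another witness is $\mathbf c=(2,2,2)$. Then $\varphi(-\mathbf c)=(-6,-4,-2,0)$, which is the canonical bundle, and $\mathbf b=(-6,-5,-4,-3)$ has all six inversions, so $H^6\neq 0$. This is the $n=4$, $a=0$ case of the computation in Proposition~\ref{prop:dLowerBound}.
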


\begin{proof}
By Proposition \ref{cor:positiveOrthant}, $\reg(\iota_*\calO_{\calF}) \subseteq \mathbb N^3$.  To see that $(0,0,0) \notin \reg(\iota_*\calO_{\calF})$, we compute:
\begin{align*}
    H^4(\mathbb P,\iota_*\calO_{\calF}\otimes\calO_{\mathbb P}(0,-4,0)) &= H^4(\calF, \calO_{\calF}(-4,-4,0,0)).
\end{align*}
This is nonzero by Theorem \ref{thm:bottAlternative} since $(-4,-4,0,0)-(0,1,2,3) = (-4,-5,-2,-3)$ contains four inversions.

It remains to check that $(1,0,0)$, $(0,1,0)$ and $(0,0,1)$ lie in $\reg(\iota_*\calO_{\calF})$.  In fact, by Lemma \ref{lemma:reverse}, it suffices to check only that $(1,0,0)$ and $(0,1,0)$ lie in $\reg(\iota_*\calO_{\calF})$.  For each, there are 83 conditions which must be verified using Theorem \ref{thm:bottAlternative}.  We implement this computation in Macaulay2 (\cite{githubRepo}).
\end{proof}


\subsection{$n=5$ case}

 \begin{proposition} \label{prop:n=5}
    For $n=5$, the minimal elements of $\reg(\iota_*\calO_{\calF})$ are
\begin{itemize}
    \item $(3,0,0,1)$, $(2,0,0,2)$, $(1,0,0,3)$, or
    \item of the form $(a,b,c,d)$ where $a+b+c+d = 3$, other than $(3,0,0,0)$, $(2,0,0,1)$, $(1,0,0,2)$ and $(0,0,0,3)$.
\end{itemize}
\end{proposition}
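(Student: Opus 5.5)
The proof has two halves: show every listed element lies in $\reg(\iota_*\calO_{\calF})$, and show that no element strictly below one of them does. By Corollary \ref{cor:positiveOrthant} we may work inside $\mathbb N^4$. By Lemma \ref{lemma:reverse} the list is symmetric under reversal, and one checks it is. By Proposition \ref{prop:closedUnderPos} it suffices to prove two things:
\begin{enumerate}
\item each listed element is regular;
\item every element of $\mathbb N^4$ not dominating a listed element fails to be regular.
\end{enumerate}
The maximal non-dominating elements form a finite antichain, so only finitely many elements need a non-regularity witness.

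\textbf{Membership.} For each listed $\mathbf d$ (up to reversal) we must verify
\[
H^i(\calF,\calO_{\calF}(\varphi(\mathbf d-\mathbf c)))=0
\]
for all $0<i\le 10$ and all $\mathbf c\in\mathbb N^4$ with $|\mathbf c|=i$. There are 1000 conditions per element. Each is decided by Theorem \ref{thm:bottAlternative}: a condition fails exactly when $\varphi(\mathbf d-\mathbf c)-(0,1,2,3,4)$ has distinct entries and exactly $i$ inversions. I would run this check with the Macaulay2 implementation (\cite{githubRepo}), as in Proposition \ref{prop:n4}.

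\textbf{Non-membership.} Many elements are excluded by Proposition \ref{prop:inductive}. If $(d_1,d_2,d_3)$ or $(d_2,d_3,d_4)$ lies outside $\reg$ for $n=4$, then $\mathbf d$ is outside. By Proposition \ref{prop:n4}, this applies whenever $d_1=d_2=d_3=0$ or $d_2=d_3=d_4=0$. That disposes of $(a,0,0,0)$ and $(0,0,0,d)$ for all $a,d$, which explains why $(3,0,0,0)$ and $(0,0,0,3)$ are not minimal.

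The remaining maximal non-regular candidates are:
\begin{itemize}
\item elements of total degree $2$;
\item $(2,0,0,1)$ and $(1,0,0,2)$, by reversal;
\item the elements $(a,0,0,d)$ lying just below the list, namely $(a,0,0,d)$ with $a+d=3$, which are not all covered, together with $(2,0,0,0)+\ldots$;
\item most importantly, the family $(k,0,0,0)$ and $(0,0,0,k)$ for all $k$, already handled above.
\end{itemize}

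For each remaining candidate I would exhibit an explicit $\mathbf c$ with $H^{|\mathbf c|}\neq0$, found by search via Theorem \ref{thm:bottAlternative}. For total degree $2$, I expect top-degree witnesses in the spirit of $(0,-4,0)$ for $n=4$. Because regularity is upward closed, it is enough to show that every element of total degree $2$ fails. Similarly, it is enough to show that $(2,0,0,1)$ fails, with $(1,0,0,2)$ following by reversal.

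\textbf{Main obstacle.} The hard step is confirming that the listed set is exactly the set of minimal elements of the region. The region restricted to the slice $b=c=0$ is not cut out by total degree: we need $a+d\ge4$ with $a,d\ge1$. So I must determine the full non-regular antichain and find witnesses for each element, without relying on a uniform argument. To make this finite, note that any $\mathbf d$ with $|\mathbf d|\ge3$ and $b+c\ge1$ dominates some listed element. Hence it suffices to handle $|\mathbf d|\le2$ together with the slice $b=c=0$. In that slice, $(a,0,0,0)$ and $(0,0,0,d)$ are excluded via Proposition \ref{prop:inductive}, and $(2,0,0,1)$ and $(1,0,0,2)$ each need a single explicit witness, which I would find computationally.
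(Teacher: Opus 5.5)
Your plan is correct and is essentially the paper's own proof: you verify membership by machine via Theorem~\ref{thm:bottAlternative}, use Proposition~\ref{prop:inductive} with Proposition~\ref{prop:n4} for the infinite families $(k,0,0,0)$ and $(0,0,0,k)$, give a single witness for $(2,0,0,1)$ (the paper uses $\mathbf c=(0,2,4,0)$ with $i=6$) and transfer it to $(1,0,0,2)$ by Lemma~\ref{lemma:reverse}, and give witnesses for every element of total degree $2$ (those infinite families also show that your opening claim, that only a finite antichain of non-regular elements needs witnesses, is false, although your final reduction handles this correctly). Your guess of top-degree witnesses in that last step is right and uniform: for $|\mathbf a|=2$ take $\mathbf c=\mathbf a+(2,2,2,2)$, so that $|\mathbf c|=10$ and $\varphi(\mathbf a-\mathbf c)=(-8,-6,-4,-2,0)$, and since $(-8,-7,-6,-5,-4)$ has $10$ inversions, $H^{10}\neq 0$, so neither a computer search nor the paper's table in Appendix~\ref{app:n=5} is needed there.
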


\begin{proof}

Verifying that the claimed elements lie in the regularity region amounts to a straightforward but tedious application of Theorem \ref{thm:bottAlternative}.  We implement this computation in Macaulay 2; the code can be found at \cite{githubRepo}.

We must also argue that these elements are minimal.  It suffices to argue that $(3,0,0,0)$, $(2,0,0,1)$, $(1,0,0,2)$, $(0,0,0,3)$, as well as $(a,b,c,d)$ with $a+b+c+d = 2$ lie outside $\reg(\iota_*\calO_{\calF})$.  This follows for $(3,0,0,0)$ and $(0,0,0,3)$ from Proposition \ref{prop:inductive} together with Proposition \ref{prop:n4}. To see that $(2,0,0,1)$ also lies outside $\reg(\iota_*\calO_{\calF})$, we compute:
\begin{align*}
        H^6(\mathbb P, \iota_*\calO_{\calF}\otimes \calO_{\mathbb P}((2,0,0,1)-(0,2,4,0))) &= H^6(\mathbb P, \iota_*\calO_{\calF}\otimes \calO_{\mathbb P}(2,-2,-4,1))\\
        &= H^6(\calF, \calO_{\calF}(-3,-5,-3,1,0)).
    \end{align*}
This is nonzero by Theorem \ref{thm:bottAlternative} since $(-3,-5,-3,1,0) - (0,1,2,3,4) = (-3,-6,-5,-2,-4)$ contains 6 inversions.  By Lemma \ref{lemma:reverse}, we see also that $(1,0,0,2)$ lies outside $\reg(\iota_*\calO_{\calF})$.

We also need to show that $(a,b,c,d)$ lies outside $\reg(\iota_*\calO_{\calF})$ when $a+b+c+d = 2$. There are 10 such elements.  By Propositions \ref{prop:inductive} and \ref{prop:n4}, $(2,0,0,0)$ and $(0,0,0,2)$ lie outside $\reg(\iota_*\calO_{\calF})$.  Applying Lemma \ref{lemma:reverse}, there remain only five elements to consider: $(1,1,0,0)$, $(1,0,1,0)$, $(1,0,0,1)$, $(0,2,0,0)$, and $(0,1,1,0)$.  
Appendix \ref{app:n=5} provides, for each of these elements, an example of a failed cohomology condition.

Finally, to see that the listed elements are the only minimal elements, it remains to check that $(a,0,0,0)$ and $(0,0,0,a)$ are not in $\reg(\iota_*\calO_{\calF})$.  This follows from Proposition \ref{prop:inductive} along with Proposition \ref{prop:n4}.
\end{proof}


\subsection{$n=6$ case}

\begin{proposition}\label{prop:n=6}
    For $n=6$, there are 179 minimal elements of $\reg(\iota_*\calO_{\calF})$:
    \begin{itemize}
        \item 147 minimal elements of total degree 6,
        \item 22 minimal elements of total degree 7, and
        \item 10 minimal elements of total degree 8.
    \end{itemize}
\end{proposition}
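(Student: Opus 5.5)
The plan follows the structure of the $n=5$ proof: a finite computation certifies membership, and explicit failed cohomology conditions (plus the inductive results) certify minimality.

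First, reduce everything to a finite check. By Corollary \ref{cor:positiveOrthant}, $\reg(\iota_*\calO_{\calF})\subseteq\mathbb N^5$. By Proposition \ref{prop:closedUnderPos}, membership of a fixed $\mathbf d$ is determined by the finitely many conditions $H^i(\calF,\calO_{\calF}(\varphi(\mathbf d-\mathbf c)))=0$ for $0<i\le 15$ and $|\mathbf c|=i$; there are $15503$ such conditions. Each condition is decided combinatorially by Theorem \ref{thm:bottAlternative}: we compute $\varphi(\mathbf d-\mathbf c)-(0,1,\dots,5)$ and check for repeated entries or count inversions.

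Next, bound the search region so that only finitely many candidates need testing. Proposition \ref{prop:inductive} combined with Proposition \ref{prop:n=5} excludes every $\mathbf d$ whose first four or last four coordinates lie outside $\reg(\calF l(\mathbb C^5))$. In particular, $(d_1,\dots,d_4)$ and $(d_2,\dots,d_5)$ must each dominate one of the $19$ minimal elements for $n=5$. This forces $|\mathbf d|\ge 3$ and rules out large coordinates concentrated at the ends with zeros in the middle.

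For the upper side, Proposition \ref{prop:inductive2} shows that $(15,\mathbf a)$ and $(\mathbf a,15)$ lie in the region whenever $\mathbf a$ is one of the $n=5$ minimal elements. Hence every minimal element has bounded coordinates. More usefully, once we exhibit explicit elements of total degree $6$, any minimal element lies in a box $\{\mathbf d\in\mathbb N^5:\mathbf d\not\ge \mathbf e \text{ for known members } \mathbf e\}$. I expect this box to be contained in total degree at most $8$, with coordinates bounded by something like $8$.

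Then enumerate the candidates in this box, run the Bott-theorem check from \cite{githubRepo} on each, and use Lemma \ref{lemma:reverse} to halve the work. We record which points are regular, and extract the minimal ones as those $\mathbf d$ in the region such that $\mathbf d-\mathbf e_k$ is not in the region for every $k$ with $d_k>0$. Counting these by total degree should produce $147+22+10=179$.

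For minimality, each non-regular neighbor $\mathbf d-\mathbf e_k$ needs a witness. Wherever possible this comes from Proposition \ref{prop:inductive} via an $n=5$ non-member. Otherwise we give an explicit pair $(i,\mathbf c)$ with $H^i\neq 0$, which I would list in Appendix \ref{app:n=6}.

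The main obstacle is justifying that the search box is exhaustive, i.e.\ that no minimal element of large total degree has been missed. To handle it, I would argue that every $\mathbf d\in\mathbb N^5$ with $|\mathbf d|\ge 9$ that survives the outer bound from Proposition \ref{prop:inductive} already dominates one of the computed degree $\le 8$ members. For the remaining cases, where mass sits mostly in $d_1$ or $d_5$, I would reduce to an $n=5$ statement via Proposition \ref{prop:inductive2}, replacing the crude bound $15$ by the actual threshold found computationally. Here I would first try to show directly that the threshold for $d_0$ in Proposition \ref{prop:inductive2} is at most $\max(g_j)$-type quantities, which are small for the $n=5$ minimal elements.
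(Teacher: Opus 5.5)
Your route is correct but differs from the paper's at the key step. The computational parts match: Bott's theorem certifies the listed elements and the low-degree non-members. The difference is in how you rule out minimal elements of total degree $\ge 9$.

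\textbf{The paper's route.} It proves Lemma \ref{lemma:n=6}: if $|\mathbf a|\ge 8$ and $\mathbf a\notin\reg(\iota_*\calO_{\calF})$, then one of the two length-$4$ windows lies outside $\reg(\kappa_*\calO_{\calF'})$. The proof is an induction on $|\mathbf a|$ whose base case is the computed list of degree-$8$ non-members. The paper then shows an element of degree $\ge 9$ stays regular after lowering $b_1$ or $b_5$, so it is not minimal.

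\textbf{Your route.} You show that every $\mathbf d$ with $|\mathbf d|\ge 9$ passing the window test of Proposition \ref{prop:inductive} dominates a listed element. That suffices: such a $\mathbf d$ then strictly dominates a regular element of degree $\le 8$. It needs no cohomology beyond the membership computations.

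\textbf{What you must add.} You have not said why this is a finite check. Let $U$ be the window region. It is the union of the orthants $\max((\mathbf m,0),(0,\mathbf m'))+\mathbb N^5$ over pairs $\mathbf m,\mathbf m'$ of the $19$ minimal elements for $n=5$. Each generator has total degree at most $4+4=8$. Hence $U\cap\{|\mathbf d|\ge 9\}$ is the up-closure of its degree-$9$ elements. Comparing those (a subset of the $\binom{13}{4}=715$ degree-$9$ vectors) with the list finishes the argument.

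\textbf{Drop the fallback.} There are no ``remaining cases''. Vectors with mass concentrated in $d_1$ or $d_5$ are caught by degree-$8$ generators such as $(4,1,0,0,3)$, $(5,0,0,1,2)$, $(2,1,0,0,5)$. So you do not need a sharpened Proposition \ref{prop:inductive2}, and as stated it is unsupported. Its proof uses $d_0\ge\dim\calF$ to beat the partial sums $c_0+\cdots+c_{j-1}$, which can be as large as $i$. You give no argument for a smaller uniform threshold.

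\textbf{Your ``box'' is not finite.} The set of $\mathbf d$ not dominating a known member contains $(k,0,0,0,0)$ for every $k$. Proposition \ref{prop:inductive2} bounds only $d_1$ and $d_5$. Finiteness comes solely from intersecting with $U$, which is exactly the exhaustiveness claim.

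\textbf{What each route buys.} Yours replaces the paper's induction by a direct combinatorial certificate. That sidesteps the induction's case analysis, which needs care: for example, $(0,6,0,0,0)$ has $a_2+a_3+a_4=6$ but is not regular, since its last window $(6,0,0,0)$ is not regular for $n=5$. The paper's lemma, combined with Proposition \ref{prop:inductive}, gives the structural statement that the region equals $U$ from total degree $8$ on. Your check yields the same statement from degree $9$ on.
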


The minimal elements are listed in Appendix \ref{app:n=6}.

Our proof will rely upon the following lemma, which says  that for elements of total degree at least $8$, it suffices to apply Proposition \ref{prop:inductive}.  As in Section \ref{sec:inductive}, we let $\calF'$ denote the the complete flag variety on $\mathbb C^{5}$ and $\kappa$ the inclusion $\kappa: \calF' \hookrightarrow \mathbb P\left(\bigwedge\mathbb C^{5}\right)= \mathbb P'$.

\begin{lemma}\label{lemma:n=6}
    For $\mathbf a \in \mathbb N^5$, if $\mathbf a \notin \reg(\iota_*\calO_{\calF})$ and $|\mathbf a| \geq 8$, then at least one of $(a_1,\ldots,a_4)$ or $(a_2,\ldots,a_5)$ lies outside $\reg(\kappa_*\calO_{\calF'}).$
\end{lemma}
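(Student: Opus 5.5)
The plan is to prove the contrapositive: suppose $\mathbf a\in\mathbb N^5$ has $|\mathbf a|\ge 8$ and both $(a_1,\ldots,a_4)$ and $(a_2,\ldots,a_5)$ lie in $\reg(\kappa_*\calO_{\calF'})$. We then show $\mathbf a\in\reg(\iota_*\calO_{\calF})$. By Proposition \ref{prop:n=5}, the hypothesis says each truncation dominates one of the 19 explicit minimal elements for $n=5$. So the set $T$ of such $\mathbf a$ is an explicit upward-closed subset of $\mathbb N^5$ with finitely many minimal elements. These are obtained as componentwise maxima compatible with both truncations.

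The statement then reduces to a finite check. Let $T_{\ge 8}=\{\mathbf a\in T: |\mathbf a|\ge 8\}$. This set is upward closed and has finitely many minimal elements; each is either a minimal element of $T$ of total degree $\ge 8$, or an element of total degree exactly $8$ dominating some minimal element of $T$. By Proposition \ref{prop:closedUnderPos}, it suffices to verify that every minimal element of $T_{\ge 8}$ lies in $\reg(\iota_*\calO_{\calF})$. Lemma \ref{lemma:reverse} lets us check only one element of each reversal pair.

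Each check means verifying the vanishing $H^i(\calF,\calO_{\calF}(\varphi(\mathbf a-\mathbf c)))=0$ for all $0<i\le 15$ and $\mathbf c\in\mathbb N^5$ with $|\mathbf c|=i$. By Theorem \ref{thm:bottAlternative}, this becomes counting inversions of $\varphi(\mathbf a-\mathbf c)-(0,1,\ldots,5)$ and checking for repeated entries. That gives 15503 conditions per element, which we would run with the Macaulay2 implementation \cite{githubRepo}, just as for Propositions \ref{prop:n4} and \ref{prop:n=5}.

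Many conditions can be discharged by hand. The argument of Proposition \ref{prop:inductive2} shows that whenever the first or last coordinate of $\varphi(\mathbf a-\mathbf c)-(0,\ldots,5)$ is extremal, the inversion count drops to an $n=5$ count. That count is controlled by the regularity of the truncation, and the difference of $c_1$ (or $c_5$) leaves slack. So only those $\mathbf c$ concentrated in the middle coordinates need a genuine computation.

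The main obstacle is the enumeration of the minimal elements of $T_{\ge 8}$ together with the cases in the Bott check where neither end coordinate dominates. There I would first try to push the Proposition \ref{prop:inductive2} inequality $a_0-a_j>d_0-(c_0+\cdots+c_{j-1})$ using $|\mathbf a|\ge 8$, and fall back on the computer verification for whatever remains.
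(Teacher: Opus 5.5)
Your proof is correct, but it is organized differently from the paper's. The paper argues by induction on $|\mathbf a|$. Its base case $|\mathbf a|=8$ is the computer-generated list in Appendix~\ref{app:n=6} of degree-$8$ elements outside $\reg(\iota_*\calO_{\calF})$, each checked to have a truncation outside $\reg(\kappa_*\calO_{\calF'})$. Its inductive step lowers $a_1$ or $a_5$ by one and runs a case analysis, using the shape of the $n=5$ region and further facts read off the $n=6$ data. You instead prove the contrapositive directly from the upward closure of $T$ and of $\reg(\iota_*\calO_{\calF})$ (Proposition~\ref{prop:closedUnderPos}), so no inductive step is needed.

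The enumeration you flag as the main obstacle is in fact easy. Each minimal element of $T$ is a join $(m_1,\max(m_2,m_1'),\max(m_3,m_2'),\max(m_4,m_3'),m_4')$ of two of the $19$ minimal elements of the $n=5$ region. All of these have total degree at most $4$, so the join has total degree at most $|\mathbf m|+|\mathbf m'|\le 8$. Hence the minimal elements of $T_{\ge 8}$ are exactly the elements of $T$ of total degree $8$, and your finite verification is precisely the paper's base case read contrapositively.

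What your route buys is that it uses nothing beyond the degree-$8$ data and upward closure. The paper's inductive step, by contrast, leans on the assertion that $\mathbf a\in\reg(\iota_*\calO_{\calF})$ whenever $a_2+a_3+a_4\ge 6$. As literally stated, that assertion is contradicted by $(0,8,0,0,0)$, which appears in the paper's own list of non-regular elements. Your remark about discharging conditions by hand via Proposition~\ref{prop:inductive2} is unnecessary; it reduces cleanly only when the first entry is the strict maximum or the last entry is the strict minimum, so you can drop it.
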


\begin{proof}
    We argue by induction on $|\mathbf a|$.  For $|\mathbf a| = 8$, see the list of such elements in Appendix \ref{app:n=6}.  By comparing this list to our description of $\reg(\kappa_*\calO_{\calF'}),$ we see that the claim holds for $|\mathbf a| = 8$.

    Next, assume the claim holds for some $N \geq 8$ and all $\mathbf a$ of total degree $N$.  Suppose $\mathbf a$ lies outside $\reg(\iota_*\calO_{\calF})$ and has $|\mathbf a| = N+1$.  If $a_2 =a_3=a_4=0$, then we're done.  If not, observe from Appendix \ref{app:n=6} that $\mathbf a$ lies in the regularity whenever $a_2+a_3+a_4 \geq 6$.  Therefore, we
cannot have $a_1 = 0$ and $a_5=0$.  Suppose, without loss of generality, that $a_1 > 0$.  Since $(a_1-1,a_2,a_3,a_4,a_5)$ lies outside   $\reg(\iota_*\calO_{\calF})$ and has total degree $N$, we  must have that $(a_1-1,a_2,a_3,a_4)$ or $(a_2,a_3,a_4,a_5)$ is not in $\reg(\kappa_*\calO_{\calF'}).$   In the second case, we're done.  In the first case, since we've assumed $a_2,a_3,a_4$ are not all zero, we must have $a_1-1+a_2+a_3+a_4 \leq 3$.  This implies $a_5 \geq N-3$.  Note also that $(a_1,a_2,a_3,a_4,a_5-1)$ must not be in $\reg(\iota_*\calO_{\calF})$, so $(a_1,a_2,a_3,a_4)$ or $(a_2,a_3,a_4,a_5-1)$ must lie outside $\reg(\kappa_*\calO_{\calF'})$.  Since at least one of $a_2,a_3,a_4$ is nonzero and since $a_5-1 \geq N-4 > 3$, we must have $(a_2,a_3,a_4,a_5-1) \in \reg(\kappa_*\calO_{\calF'})$.  Therefore, $(a_1,a_2,a_3,a_4) \notin \reg(\kappa_*\calO_{\calF'})$, finishing the proof of our claim.
\end{proof}

\begin{proof}[Proof of Proposition \ref{prop:n=6}]
As for lower values of $n$, checking that the claimed elements lie in the regularity region is a straightforward application of Theorem \ref{thm:bottAlternative}, but infeasible to complete by hand.  The code for our implementation can be found at \cite{githubRepo}.  

We must also prove that there are no minimal elements of total degree greater than $8$.  Let $\mathbf b \in \reg(\iota_*\calO_{\calF})$ with $|\mathbf b| \geq 9$.  We will argue that $\mathbf b$ is not a minimal element of the regularity.  We must have $b_2+b_3+b_4 \geq 1$.  If $b_2+b_3+b_4 = 1$, then $b_1+b_5 \geq 8$, so without loss of generality $b_1 \geq 4$.  Consider $\mathbf b' \coloneqq (b_1-1,b_2,b_3,b_4,b_5)$.  We have $(b_1-1,b_2,b_3,b_4) \in \reg(\kappa_*\calO_{\calF'})$ since $b_1-1 \geq 3$ and $b_2+b_3+b_4 = 1$.  We also have $(b_2,b_3,b_4,b_5) \in \reg(\kappa_*\calO_{\calF'})$ by Proposition \ref{prop:inductive}.  Then Lemma \ref{lemma:n=6} implies $\mathbf b' \in \reg(\iota_*\calO_{\calF})$.  Since $\mathbf b = \mathbf b' + (1,0,0,0,0)$, this shows that $\mathbf b$ is not minimal.

It remains to consider the case where $b_2+b_3+b_4 \geq 2$.  If $\mathbf b$ were minimal, then Lemma \ref{lemma:n=6} would imply $b_1+b_2+b_3+b_4-1 \leq 3$ or $b_2+b_3+b_4+b_5-1 \leq 3$.  Then we could assume, without loss of generality, that $b_1\geq 5$.  Consider $\mathbf b' \coloneqq (b_1-1, b_2,b_3,b_4,b_5)$.  We have $(b_1-1, b_2,b_3,b_4) \in \reg(\kappa_*\calO_{\calF'})$ since $b_1-1 \geq 4$ and $b_2+b_3+b_4 \geq 2$.  We also have $(b_2,b_3,b_4,b_5) \in \reg(\kappa_*\calO_{\calF'})$ by Proposition \ref{prop:inductive}.  Then Lemma \ref{lemma:n=6} implies $\mathbf b' \in \reg(\iota_*\calO_{\calF})$, so again $\mathbf b$ is not minimal.
\end{proof}

\section{Metrics on multigraded regularity}\label{sec:totalDegree}

The results in Section \ref{sec:lowValuesN} lead us to ask about two different metrics on the regularity regions $\reg(\iota_*\calO_{\calF l(\mathbb C^n)})$, each of which is addressed in a subsection below.

\subsection{Minimal total degree}

Stated informally, we ask the following:

\hspace{25pt} How far from the origin are the minimal elements of the regularity of $\calF l(\mathbb C^n)$?

More precisely, we consider the following function of $n$:
\begin{equation*}\label{eqn:m(n)}
    m(n) \coloneqq \min\{\lvert \mathbf{a} \rvert : \mathbf{a} \in \reg(\iota_*\calO_{\calF l(\mathbb C^n)})\}
\end{equation*}
where $\lvert \mathbf a \rvert = \sum_i a_i$ is the total degree of $\mathbf a$.

Our inductive results allow us to obtain bounds on $m(n)$:

\begin{proposition}\label{prop:m(n)lower}
    For $n \geq 7$, $m(n) \geq \frac{6}{5}n-6$.
\end{proposition}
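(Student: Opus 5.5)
We propose to derive the bound from Proposition \ref{prop:inductive}, iterated, together with the explicit description of $\reg(\iota_*\calO_{\calF l(\mathbb C^6)})$ in Proposition \ref{prop:n=6}. The guiding principle is that whenever $\mathbf d \in \reg(\iota_*\calO_{\calF l(\mathbb C^n)})$, the contrapositive of Proposition \ref{prop:inductive} shows that both truncations $(d_2,\ldots,d_{n-1})$ and $(d_1,\ldots,d_{n-2})$ lie in the regularity region for $\mathbb C^{n-1}$. Iterating, every window of consecutive entries $(d_{k+1},\ldots,d_{k+m-1})$ of length $m-1$ lies in $\reg(\iota_*\calO_{\calF l(\mathbb C^{m})})$, and therefore has total degree at least $m(m)$. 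In particular, with $m=6$, every window of $5$ consecutive entries of $\mathbf d$ has sum at least $m(6)=6$. Here $m(6)=6$ because by Proposition \ref{prop:n=6} every minimal element has total degree at least $6$.

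The plan is to partition the $n-1$ coordinates of $\mathbf d$ into disjoint consecutive blocks of length $5$, with a leftover of fewer than $5$ entries. There are $\lfloor (n-1)/5 \rfloor$ full blocks, each contributing at least $6$, and the leftover entries are nonnegative by Corollary \ref{cor:positiveOrthant}. This gives
\[
|\mathbf d| \geq 6\left\lfloor \frac{n-1}{5}\right\rfloor \geq 6\cdot\frac{n-1-4}{5} = \frac{6}{5}n - 6,
\]
which is exactly the claimed bound. The hypothesis $n\geq 7$ guarantees at least one full block, though the inequality is formal once the blocks are defined. Minimizing over $\mathbf d$ then yields $m(n)\geq \frac65 n-6$.

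The main point requiring care is the justification that windows inherit regularity. Proposition \ref{prop:inductive} only drops the first or the last coordinate, so an interior window must be reached by repeatedly deleting an endpoint. Each deletion passes from $\calF l(\mathbb C^{k})$ to $\calF l(\mathbb C^{k-1})$, and the proposition applies at every stage. This needs a short induction, and it must use that Proposition \ref{prop:inductive} holds for every $n$ (with $\calF'$ the flag variety one size smaller). The remaining input is the value $m(6)=6$, which we take from Proposition \ref{prop:n=6} and Appendix \ref{app:n=6}. One could also try blocks of length $4$ using $m(5)=3$, but that gives only $\frac34 n$, so the $n=6$ data, with ratio $6/5$, is the right choice.
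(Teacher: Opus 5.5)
Your proposal is correct and follows essentially the same route as the paper: partition the $n-1$ coordinates into consecutive blocks of length $5$, use Proposition \ref{prop:inductive} (iterated) to place each block in the $n=6$ regularity region so that it has sum at least $m(6)=6$, and conclude $|\mathbf d| \geq 6\lfloor (n-1)/5\rfloor \geq \frac{6}{5}n-6$. Your remarks on iterating the endpoint deletions to reach interior windows, and on using Corollary \ref{cor:positiveOrthant} for the leftover entries, spell out steps the paper leaves implicit.
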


\begin{proof}
    Suppose $\mathbf a \in \mathbb N^{n-1}$ lies in $\reg(\iota_*\calO_{\calF })$.  We will split $\mathbf a$ into subsequences of length 5 and apply Proposition \ref{prop:inductive}.  Concretely, write $n-1 = 5q+r$ where $0\leq r < 5$.  Consider the following subsequences of $\mathbf a$:
    \begin{equation*}
        (a_1,\ldots,a_5),  (a_6,\ldots,a_{10}),\ldots,( a_{5q-4},\ldots,a_{5q}),(a_{5q+1},\ldots,a_{n-1}).
    \end{equation*}
    For $k = 1,\ldots, q$, we must have $a_{5k-4}+\cdots + a_{5k} \geq 6$ since $m(6) = 6$ by Proposition \ref{prop:n=6}.  Then we have:
    \begin{align*}
        |\mathbf a| &\geq 6q\\
        &= 6\cdot \frac{n-(r+1)}{5}\\
        &\geq \frac{6}{5}(n-5)\\
        &= \frac{6}{5}n-6.
    \end{align*}
\end{proof}

\begin{proposition}\label{prop:m(n)upper}
    For $n \geq 3$, $m(n) \leq \frac{n^3-n-24}{6}$.
\end{proposition}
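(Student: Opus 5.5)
The plan is to build an explicit element of $\reg(\iota_*\calO_{\calF l(\mathbb C^n)})$ by induction on $n$, using Proposition \ref{prop:inductive2} at each step, and then add up its entries. For the base case $n=3$, Proposition \ref{prop:n=3} gives $(0,0)\in\reg(\iota_*\calO_{\calF l(\mathbb C^3)})$. Hence $m(3)=0=\frac{27-3-24}{6}$, so the bound holds there with equality.

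For the inductive step, suppose $\mathbf d^{(n-1)}\in\reg(\kappa_*\calO_{\calF l(\mathbb C^{n-1})})$ has total degree $\frac{(n-1)^3-(n-1)-24}{6}$. By Proposition \ref{prop:inductive2}, prepending the entry $d_0=\dim(\calF l(\mathbb C^n))=\binom{n}{2}$ gives
\[
\mathbf d^{(n)} \coloneqq \left(\tbinom{n}{2},\mathbf d^{(n-1)}\right)\in\reg(\iota_*\calO_{\calF l(\mathbb C^n)}).
\]
Unwinding the recursion, $\mathbf d^{(n)}=\left(\binom{n}{2},\binom{n-1}{2},\ldots,\binom{4}{2},0,0\right)$. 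Its total degree, and hence an upper bound for $m(n)$, is
\[
\sum_{k=4}^{n}\binom{k}{2} \;=\; \sum_{k=2}^{n}\binom{k}{2} - \binom{2}{2}-\binom{3}{2} \;=\; \binom{n+1}{3}-4 \;=\; \frac{(n+1)n(n-1)}{6}-4 \;=\; \frac{n^3-n-24}{6},
\]
using the hockey-stick identity $\sum_{k=2}^n\binom{k}{2}=\binom{n+1}{3}$.

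There is no real obstacle here. The only points to check are that Proposition \ref{prop:inductive2} is applied with the correct dimension ($\dim\calF l(\mathbb C^n)=\binom n2$ for the larger flag variety) and that the summation starts at $k=4$, since the base case already has total degree $0$. Equivalently, one can phrase this as the recursion $m(n)\le m(n-1)+\binom n2$ with $m(3)=0$, and check that $\frac{n^3-n-24}{6}-\frac{(n-1)^3-(n-1)-24}{6}=\binom n2$.
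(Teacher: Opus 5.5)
Your proof is correct and is the same argument as the paper's: start from $(0,0)\in\reg(\iota_*\calO_{\calF l(\mathbb C^3)})$, apply Proposition \ref{prop:inductive2} repeatedly with new entry $\binom{n}{2}$, and sum $\binom{4}{2}+\cdots+\binom{n}{2}$. The differences are cosmetic. You prepend the new entry, exactly as Proposition \ref{prop:inductive2} is stated, whereas the paper appends it on the right and so implicitly uses Lemma \ref{lemma:reverse}. You also justify the closed form via the hockey-stick identity, which the paper leaves unstated.
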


\begin{proof}
    We know that $(0,0)$ is in $\reg(\iota_*\calO_{\calF l(\mathbb C^3)})$.  By Proposition \ref{prop:inductive2}, $(0,0,6) \in \reg(\iota_*\calO_{\calF l(\mathbb C^4)})$, $(0,0,6,10) \in \reg(\iota_*\calO_{\calF l(\mathbb C^5)})$, and so on.  In general, repeatedly applying Proposition \ref{prop:inductive2} yields:
    \begin{align*}
    m(n) &\leq 6+10+\cdots \frac{n(n-1)}{2}\\
    &= \frac{n^3-n-24}{6}.
    \end{align*}
\end{proof}

So the growth of $m(n)$ is at least linear and at most cubic in $n$.  Examining the data for low values of $n$ suggests the following:

\begin{conj}\label{conj:m(n)}
    For $n \geq 4$, $m(n) = \binom{n-2}{2}$.
\end{conj}

\subsection{Multiples of $(1,1,\ldots,1)$}

The second metric which we study is given by intersecting the regularity region with the line spanned by $(1,1,\ldots,1)$.

\begin{question}\label{QAllOnes}
    For which values of $a$ is $(a,a,\ldots,a)$ in $\reg(\iota_*\calO_{\calF})$?
\end{question}

Note that by definition, $\iota_*\calO_{\calF}$ is $(a,a,\ldots,a)$-regular if for all $i>0$ and all $|\mathbf c| = i$,
\begin{equation*}
    H^i(\mathbb P,\iota_*\calO_{\calF}((a,\ldots,a)-\mathbf c)) =0.
\end{equation*}

We establish a lower bound on the answer to Question \ref{QAllOnes}.

\begin{proposition}
\label{prop:dLowerBound}
If $(a,a,\ldots,a)$ is in $\reg(\iota_*\calO_{\calF})$, then $a \geq \frac{n-3}{2}$.
\end{proposition}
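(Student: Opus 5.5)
The plan is to prove the contrapositive: if $a < \frac{n-3}{2}$, then $(a,\ldots,a)\notin\reg(\iota_*\calO_{\calF})$.

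\textbf{Reduction to $n = 2a+4$.} The condition $a<\frac{n-3}{2}$ means $n\ge 2a+4$. Deleting the first entry of $(a,\ldots,a)$ gives the constant vector of length one less. So repeated use of Proposition~\ref{prop:inductive}, together with Lemma~\ref{lemma:reverse}, reduces everything to the single case $n=2a+4$, i.e.\ the constant vector $(a,\ldots,a)\in\mathbb N^{2a+3}$. In that case I must exhibit $\mathbf c\in\mathbb N^{n-1}$ with $|\mathbf c|=i$ and
\[
H^i(\calF,\calO_{\calF}(\varphi((a,\ldots,a)-\mathbf c)))\neq 0 .
\]
By Theorem~\ref{thm:bottAlternative}, this holds exactly when $\mathbf b:=\varphi((a,\ldots,a)-\mathbf c)-(0,1,\ldots,n-1)$ has distinct entries and exactly $|\mathbf c|$ inversions.

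\textbf{Inversion count.} Write $\mathbf c=\sum_j k_j e_j$ and $C_m=\sum_{j\ge m}k_j$. Then
\[
b_m=a(n-m)-(m-1)-C_m .
\]
A pair $m<l$ is an inversion iff $C_m-C_l=\sum_{m\le j<l}k_j>(a+1)(l-m)$. Distinctness means this difference never equals $(a+1)(l-m)$. So the problem is purely combinatorial: find nonnegative integers $k_1,\ldots,k_{n-1}$ whose number of ``heavy windows'' equals their total. A window $[m,l)$ is heavy if its sum exceeds $(a+1)$ times its length, and no window may have sum exactly $(a+1)$ times its length.

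\textbf{Choice of $\mathbf c$.} The base case $a=0$, $n=4$ is the paper's own $\mathbf c=(0,4,0)$, which gives the $H^4$ computation in Proposition~\ref{prop:n4}. A first thought is to generalize it with a single spike $k e_h$ at the middle position $h=a+2$. Then the heavy windows are those of length $t<k/(a+1)$ containing $h$. I checked that for $a=1$ no value of $k$ makes the count match: the cumulative counts $1,3,6,8,9$ always miss the window $\bigl((a+1)T,(a+1)(T+1)\bigr)$. So I will instead use several spikes, modeled on the $n=5$ witness $\mathbf c=(0,2,4,0)$ used for $(2,0,0,1)$. Concretely, I will put mass on the middle positions in an increasing-then-decreasing profile, using total degree up to $\dim\calF=\binom{n}{2}$ if needed. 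I will tune the profile so that the heavy windows are exactly those contained in a central block, adjusting single entries by $\pm1$ to break ties. I will first compute small cases ($a=1,2$, so $n=6,8$) with the Macaulay2 implementation \cite{githubRepo} to guess the pattern, and then verify the general count by summing over window lengths.

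\textbf{Main obstacle.} The hard step is finding a closed-form $\mathbf c$ for which the number of heavy windows equals $|\mathbf c|$ exactly, while avoiding all ties, uniformly in $a$. The reduction step is routine. If the symmetric central profile resists an exact count, my fallback is a two-block construction: $\mathbf c$ supported on two adjacent positions $h-1,h$. This gives a two-parameter family in which the count is a piecewise-quadratic function, and I would solve for matching parameters.
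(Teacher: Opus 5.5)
Your reduction to $n=2a+4$ via Proposition~\ref{prop:inductive} is valid. Your reformulation is also correct: with $C_m=\sum_{j\ge m}k_j$, the pair $m<l$ is an inversion of $\mathbf b$ exactly when $\sum_{m\le j<l}k_j>(a+1)(l-m)$. But the proposal stops exactly where the proof has to happen. You never exhibit a witness $\mathbf c$. What you offer instead is a search plan: a tuned increasing-then-decreasing profile, Macaulay2 experiments at $a=1,2$, and a two-block fallback. Nothing guarantees that any of these families contains a witness for every $a$. As written, the argument does not prove the statement.

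The missing idea is that the simplest choice already works, and it works precisely at $n=2a+4$. Take $\mathbf c=(a+2,a+2,\ldots,a+2)\in\mathbb N^{2a+3}$.
\begin{itemize}
\item Every window has sum $(a+2)(l-m)>(a+1)(l-m)$, so all $\binom{n}{2}$ windows are heavy and no ties occur.
\item The total degree is $|\mathbf c|=(a+2)(2a+3)=\binom{2a+4}{2}=\dim\calF$, matching the number of heavy windows.
\end{itemize}
Concretely, $(a,\ldots,a)-\mathbf c=(-2,\ldots,-2)$, and $\varphi(-2,\ldots,-2)=(-2(n-1),\ldots,-2,0)$ is the canonical bundle. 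Subtracting $(0,1,\ldots,n-1)$ gives $m$-th entry $-2n+m+1$, which is strictly increasing in $m$, so $H^{\dim\calF}\neq 0$ by Theorem~\ref{thm:bottAlternative}.

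This is exactly the paper's proof. There it is written as $n=2k$, $a=k-2$, $\mathbf c=(k,\ldots,k)$, followed by Proposition~\ref{prop:inductive} for odd $n$. So rather than aiming for heavy windows confined to a central block, aim to make \emph{every} window heavy while keeping the total degree exactly $\dim\calF$. The counting identity $(a+2)(n-1)=\binom{n}{2}$ is what singles out $n=2a+4$. One small point: for $a\le -2$ your reduction target $2a+4$ is not a valid $n$, but that range is already covered by Corollary~\ref{cor:positiveOrthant}.
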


\begin{proof}
    For $n = 2k$ and $a = \lfloor\frac{n-3}{2}\rfloor  = k-2$, we will argue that $(a,a,\ldots,a)$ is not in $\reg(\iota_*\calO_{\calF})$.  Let $\mathbf{b} = (k,k,\ldots,k)$, and note that $\lvert\mathbf{b}\rvert = k\cdot (n-1) = \frac{n(n-1)}{2}$.  Writing $i = \frac{n(n-1)}{2}$, we have
    \begin{align*}
        H^i(\mathbb P,&\iota_*\calO_{\calF}\otimes \calO_{\mathbb P}((a,a,\ldots,a)-\mathbf b)) \\
        &= H^i(\calF,\calO_{\calF}((n-1)(k-2),\ldots,k-2,0)\otimes\calO_{\calF}(-(n-1)k,\ldots,-k,0))\\
        &= H^i(\calF,\calO_{\calF}(-2(n-1),\ldots,-2,0)).
    \end{align*}
    This is nonzero since
    \begin{align*}
        &(-2(n-1),-2(n-2),\ldots,-2,0) - (0,1,\ldots,-(n-2),-(n-1))\\
        &= (-2(n-1),-2(n-2)-1,\ldots,-2-(n-2),-(n-1))
    \end{align*}
    is strictly increasing, i.e. contains the maximum number of inversions: $\binom{n}{2} = i$.  Therefore, $(a,a,\ldots,a) \notin \reg(\iota_*\calO_{\calF})$ for $a \leq k-2 = \lfloor\frac{n-3}{2}\rfloor$.

    For $n = 2k+1$, we apply Proposition \ref{prop:inductive} together with the above argument to conclude that $(a,a,\ldots,a) \notin \reg(\iota_*\calO_{\calF})$ for $a \leq k-2 = \frac{n-3}{2} - 1$.  That is, $(a,a,\ldots,a) \notin \reg(\iota_*\calO_{\calF})$ for $a < \frac{n-3}{2}$.
\end{proof}

We conjecture the following answer to Question \ref{QAllOnes}.

\begin{conj}
    $(a,a,\ldots,a)$ is in $\reg(\iota_*\calO_{\calF})$ if and only if $a \geq \frac{n-3}{2}$.
\end{conj}

\bibliographystyle{alpha}
\bibliography{bibliography}

\appendix

\section{Computations for low values of $n$}

\subsection{n = 5}\label{app:n=5}

For each $\mathbf a \in \mathbb N^4$ with $|\mathbf a| = 2$, the table below provides values of $i$ and $\mathbf c$ such that $|\mathbf c| = i$ and 
\[
H^i(\mathbb P, \iota_*\calO_{\calF}\otimes \calO_{\mathbb P}(\mathbf a-\mathbf c)) \neq 0.
\]
For each $\mathbf a$, there are several such $i$ and $\mathbf c$.  One can check that those in the list are minimal.  See code at \cite{githubRepo}.

\begin{center}
\begin{tabular}{ |c|c|c|c| } 
 \hline
$\mathbf a$ & i  & $\mathbf c$ \\
  \hline
  $(2,0,0,0)$ & $4$ & $(0,0,4,0)$\\ 
  \hline
$(1,1,0,0)$ & $7$ & $(0,3,4,0)$\\ 
  \hline
$(1,0,1,0)$ & $7$ & $(0,2,5,0)$\\ 
  \hline
$(1,0,0,1)$ & $5$ & $(0,5,0,0)$\\ 
  \hline
$(0,2,0,0)$ & $8$ & $(0,5,3,0)$\\ 
  \hline
$(0,1,1,0)$ & $8$ & $(0,4,4,0)$\\ 
  \hline
$(0,1,0,1)$ & $7$ & $(0,5,2,0)$\\ 
  \hline
$(0,0,2,0)$ & $8$ & $(0,3,5,0)$\\ 
  \hline
$(0,0,1,1)$ & $7$ & $(0,4,3,0)$\\ 
  \hline
  $(0,0,0,2)$ & $4$ & $(0,4,0,0)$\\ 
  \hline
\end{tabular}
\end{center}

\newpage

\subsection{n=6}\label{app:n=6}

We include a list of the minimal elements of $\reg(\iota_*\calO_{\calF})$.  To verify, see code at  \cite{githubRepo}.

\begin{itemize}
    \item Total degree 6:
    {\tiny \begin{align*} 
    & \left(3,\,2,\,1,\,0,\,0\right),\,\left(2,\,3,\,1,\,0,\,0\right),\,\left(1,\,4,\,1,\,0,\,0\right),\,\left(0,\,5,\,1,\,0,\,0\right),\,\left(3,\,1,\,2,\,0,\,0\right), \\
    & \left(2,\,2,\,2,\,0,\,0\right),\,\left(1,\,3,\,2,\,0,\,0\right),\,\left(0,\,4,\,2,\,0,\,0\right),\,\left(3,\,0,\,3,\,0,\,0\right),\,\left(2,\,1,\,3,\,0,\,0\right), \\
    &\left(1,\,2,\,3,\,0,\,0\right),\,\left(0,\,3,\,3,\,0,\,0\right),\,\left(2,\,0,\,4,\,0,\,0\right),\,\left(1,\,1,\,4,\,0,\,0\right),\,\left(0,\,2,\,4,\,0,\,0\right), \\
    &\left(1,\,0,\,5,\,0,\,0\right),\,\left(0,\,1,\,5,\,0,\,0\right),\,\left(0,\,0,\,6,\,0,\,0\right),\,\left(3,\,2,\,0,\,1,\,0\right),\,\left(2,\,3,\,0,\,1,\,0\right),\\
    &\left(1,\,4,\,0,\,1,\,0\right),\,\left(0,\,5,\,0,\,1,\,0\right),\,\left(3,\,1,\,1,\,1,\,0\right),\,\left(2,\,2,\,1,\,1,\,0\right),\,\left(1,\,3,\,1,\,1,\,0\right),\\
    &\left(0,\,4,\,1,\,1,\,0\right),\,\left(3,\,0,\,2,\,1,\,0\right),\,\left(2,\,1,\,2,\,1,\,0\right),\,\left(1,\,2,\,2,\,1,\,0\right),\,\left(0,\,3,\,2,\,1,\,0\right),\\
    &\left(2,\,0,\,3,\,1,\,0\right),\,\left(1,\,1,\,3,\,1,\,0\right),\,\left(0,\,2,\,3,\,1,\,0\right),\,\left(1,\,0,\,4,\,1,\,0\right),\,\left(0,\,1,\,4,\,1,\,0\right),\\
    &\left(0,\,0,\,5,\,1,\,0\right),\,\left(3,\,1,\,0,\,2,\,0\right),\,\left(2,\,2,\,0,\,2,\,0\right),\,\left(1,\,3,\,0,\,2,\,0\right),\,\left(0,\,4,\,0,\,2,\,0\right),\\
    &\left(3,\,0,\,1,\,2,\,0\right),\,\left(2,\,1,\,1,\,2,\,0\right),\,\left(1,\,2,\,1,\,2,\,0\right),\,\left(0,\,3,\,1,\,2,\,0\right),\,\left(2,\,0,\,2,\,2,\,0\right),\\
    &\left(1,\,1,\,2,\,2,\,0\right),\,\left(0,\,2,\,2,\,2,\,0\right),\,\left(1,\,0,\,3,\,2,\,0\right),\,\left(0,\,1,\,3,\,2,\,0\right),\,\left(0,\,0,\,4,\,2,\,0\right),\\
    &\left(3,\,0,\,0,\,3,\,0\right),\,\left(2,\,1,\,0,\,3,\,0\right),\,\left(1,\,2,\,0,\,3,\,0\right),\,\left(0,\,3,\,0,\,3,\,0\right),\,\left(2,\,0,\,1,\,3,\,0\right),\\
    &\left(1,\,1,\,1,\,3,\,0\right),\,\left(0,\,2,\,1,\,3,\,0\right),\,\left(1,\,0,\,2,\,3,\,0\right),\,\left(0,\,1,\,2,\,3,\,0\right),\,\left(0,\,0,\,3,\,3,\,0\right),\\
    &\left(2,\,0,\,0,\,4,\,0\right),\,\left(1,\,1,\,0,\,4,\,0\right),\,\left(0,\,2,\,0,\,4,\,0\right),\,\left(1,\,0,\,1,\,4,\,0\right),\,\left(0,\,1,\,1,\,4,\,0\right),\\
    &\left(0,\,0,\,2,\,4,\,0\right),\,\left(1,\,0,\,0,\,5,\,0\right),\,\left(0,\,1,\,0,\,5,\,0\right),\,\left(0,\,0,\,1,\,5,\,0\right),\,\left(2,\,3,\,0,\,0,\,1\right),\\
    &\left(1,\,4,\,0,\,0,\,1\right),\,\left(0,\,5,\,0,\,0,\,1\right),\,\left(2,\,2,\,1,\,0,\,1\right),\,\left(1,\,3,\,1,\,0,\,1\right),\,\left(0,\,4,\,1,\,0,\,1\right),\\
    &\left(2,\,1,\,2,\,0,\,1\right),\,\left(1,\,2,\,2,\,0,\,1\right),\,\left(0,\,3,\,2,\,0,\,1\right),\,\left(2,\,0,\,3,\,0,\,1\right),\,\left(1,\,1,\,3,\,0,\,1\right),\\
    &\left(0,\,2,\,3,\,0,\,1\right),\,\left(1,\,0,\,4,\,0,\,1\right),\,\left(0,\,1,\,4,\,0,\,1\right),\,\left(0,\,0,\,5,\,0,\,1\right),\,\left(2,\,2,\,0,\,1,\,1\right),\\
    &\left(1,\,3,\,0,\,1,\,1\right),\,\left(0,\,4,\,0,\,1,\,1\right),\,\left(2,\,1,\,1,\,1,\,1\right),\,\left(1,\,2,\,1,\,1,\,1\right),\,\left(0,\,3,\,1,\,1,\,1\right),\\
    &\left(2,\,0,\,2,\,1,\,1\right),\,\left(1,\,1,\,2,\,1,\,1\right),\,\left(0,\,2,\,2,\,1,\,1\right),\,\left(1,\,0,\,3,\,1,\,1\right),\,\left(0,\,1,\,3,\,1,\,1\right),\\
    &\left(0,\,0,\,4,\,1,\,1\right),\,\left(2,\,1,\,0,\,2,\,1\right),\,\left(1,\,2,\,0,\,2,\,1\right),\,\left(0,\,3,\,0,\,2,\,1\right),\,\left(2,\,0,\,1,\,2,\,1\right),\\
    &\left(1,\,1,\,1,\,2,\,1\right),\,\left(0,\,2,\,1,\,2,\,1\right),\,\left(1,\,0,\,2,\,2,\,1\right),\,\left(0,\,1,\,2,\,2,\,1\right),\,\left(0,\,0,\,3,\,2,\,1\right),\\
    &\left(2,\,0,\,0,\,3,\,1\right),\,\left(1,\,1,\,0,\,3,\,1\right),\,\left(0,\,2,\,0,\,3,\,1\right),\,\left(1,\,0,\,1,\,3,\,1\right),\,\left(0,\,1,\,1,\,3,\,1\right),\\
    &\left(0,\,0,\,2,\,3,\,1\right),\,\left(1,\,0,\,0,\,4,\,1\right),\,\left(0,\,1,\,0,\,4,\,1\right),\,\left(0,\,0,\,1,\,4,\,1\right),\,\left(1,\,3,\,0,\,0,\,2\right),\\
    &\left(0,\,4,\,0,\,0,\,2\right),\,\left(1,\,2,\,1,\,0,\,2\right),\,\left(0,\,3,\,1,\,0,\,2\right),\,\left(1,\,1,\,2,\,0,\,2\right),\,\left(0,\,2,\,2,\,0,\,2\right),\\
    &\left(1,\,0,\,3,\,0,\,2\right),\,\left(0,\,1,\,3,\,0,\,2\right),\,\left(0,\,0,\,4,\,0,\,2\right),\,\left(1,\,2,\,0,\,1,\,2\right),\,\left(0,\,3,\,0,\,1,\,2\right),\\
    &\left(1,\,1,\,1,\,1,\,2\right),\,\left(0,\,2,\,1,\,1,\,2\right),\,\left(1,\,0,\,2,\,1,\,2\right),\,\left(0,\,1,\,2,\,1,\,2\right),\,\left(0,\,0,\,3,\,1,\,2\right),\\
    &\left(1,\,1,\,0,\,2,\,2\right),\,\left(0,\,2,\,0,\,2,\,2\right),\,\left(1,\,0,\,1,\,2,\,2\right),\,\left(0,\,1,\,1,\,2,\,2\right),\,\left(0,\,0,\,2,\,2,\,2\right),\\
    &\left(1,\,0,\,0,\,3,\,2\right),\,\left(0,\,1,\,0,\,3,\,2\right),\,\left(0,\,0,\,1,\,3,\,2\right),\,\left(0,\,3,\,0,\,0,\,3\right),\,\left(0,\,2,\,1,\,0,\,3\right),\\
    &\left(0,\,1,\,2,\,0,\,3\right),\,\left(0,\,0,\,3,\,0,\,3\right),\,\left(0,\,2,\,0,\,1,\,3\right),\,\left(0,\,1,\,1,\,1,\,3\right),\,\left(0,\,0,\,2,\,1,\,3\right),\\
    &\left(0,\,1,\,0,\,2,\,3\right),\,\left(0,\,0,\,1,\,2,\,3\right)
    \end{align*}}%
    \item Total degree 7:
    {\tiny\begin{align*}
    &\left(4,\,1,\,1,\,0,\,1\right),\,\left(4,\,0,\,2,\,0,\,1\right),\,\left(4,\,1,\,0,\,1,\,1\right),\,\left(4,\,0,\,1,\,1,\,1\right),\,\left(4,\,0,\,0,\,2,\,1\right),\\
    &\left(3,\,2,\,0,\,0,\,2\right),\,\left(3,\,1,\,1,\,0,\,2\right),\,\left(3,\,0,\,2,\,0,\,2\right),\,\left(3,\,1,\,0,\,1,\,2\right),\,\left(3,\,0,\,1,\,1,\,2\right),\\
    &\left(3,\,0,\,0,\,2,\,2\right),\,\left(2,\,2,\,0,\,0,\,3\right),\,\left(2,\,1,\,1,\,0,\,3\right),\,\left(2,\,0,\,2,\,0,\,3\right),\,\left(2,\,1,\,0,\,1,\,3\right),\\
    &\left(2,\,0,\,1,\,1,\,3\right),\,\left(2,\,0,\,0,\,2,\,3\right),\,\left(1,\,2,\,0,\,0,\,4\right),\,\left(1,\,1,\,1,\,0,\,4\right),\,\left(1,\,0,\,2,\,0,\,4\right),\\
    &\left(1,\,1,\,0,\,1,\,4\right),\,\left(1,\,0,\,1,\,1,\,4\right)
    \end{align*}}
    \item Total degree 8: 
    {\tiny\begin{align*}
    &\left(5,\,0,\,1,\,0,\,2\right),\,\left(5,\,0,\,0,\,1,\,2\right),\,\left(4,\,1,\,0,\,0,\,3\right),\,\left(4,\,0,\,1,\,0,\,3\right),\,\left(4,\,0,\,0,\,1,\,3\right),\\
    &\left(3,\,1,\,0,\,0,\,4\right),\,\left(3,\,0,\,1,\,0,\,4\right),\,\left(3,\,0,\,0,\,1,\,4\right),\,\left(2,\,1,\,0,\,0,\,5\right),\,\left(2,\,0,\,1,\,0,\,5\right)
    \end{align*}}
\end{itemize}

\newpage
We also include a list of elements of total degree 8 which lie outside $\reg(\iota_*\calO_{\calF})$.

\begin{align*}
    &\left(8,\,0,\,0,\,0,\,0\right),\,\left(7,\,1,\,0,\,0,\,0\right),\,\left(6,\,2,\,0,\,0,\,0\right),\,\left(5,\,3,\,0,\,0,\,0\right),\,\left(4,\,4,\,0,\,0,\,0\right),\\
&\left(3,\,5,\,0,\,0,\,0\right),\,\left(2,\,6,\,0,\,0,\,0\right),\,\left(1,\,7,\,0,\,0,\,0\right),\,\left(0,\,8,\,0,\,0,\,0\right),\,\left(7,\,0,\,1,\,0,\,0\right),\\
&\left(6,\,1,\,1,\,0,\,0\right),\,\left(6,\,0,\,2,\,0,\,0\right),\,\left(7,\,0,\,0,\,1,\,0\right),\,\left(6,\,1,\,0,\,1,\,0\right),\,\left(6,\,0,\,1,\,1,\,0\right),\\
&\left(6,\,0,\,0,\,2,\,0\right),\,\left(0,\,0,\,0,\,8,\,0\right),\,\left(7,\,0,\,0,\,0,\,1\right),\,\left(6,\,1,\,0,\,0,\,1\right),\,\left(5,\,2,\,0,\,0,\,1\right),\\
&\left(6,\,0,\,1,\,0,\,1\right),\,\left(6,\,0,\,0,\,1,\,1\right),\,\left(0,\,0,\,0,\,7,\,1\right),\,\left(6,\,0,\,0,\,0,\,2\right),\,\left(5,\,1,\,0,\,0,\,2\right),\\
&\left(0,\,0,\,0,\,6,\,2\right),\,\left(5,\,0,\,0,\,0,\,3\right),\,\left(0,\,0,\,0,\,5,\,3\right),\,\left(4,\,0,\,0,\,0,\,4\right),\,\left(0,\,0,\,0,\,4,\,4\right),\\
&\left(3,\,0,\,0,\,0,\,5\right),\,\left(2,\,0,\,0,\,1,\,5\right),\,\left(1,\,0,\,0,\,2,\,5\right),\,\left(0,\,0,\,0,\,3,\,5\right),\,\left(2,\,0,\,0,\,0,\,6\right),\\
&\left(1,\,1,\,0,\,0,\,6\right),\,\left(0,\,2,\,0,\,0,\,6\right),\,\left(1,\,0,\,1,\,0,\,6\right),\,\left(0,\,1,\,1,\,0,\,6\right),\,\left(0,\,0,\,2,\,0,\,6\right),\\
&\left(1,\,0,\,0,\,1,\,6\right),\,\left(0,\,1,\,0,\,1,\,6\right),\,\left(0,\,0,\,1,\,1,\,6\right),\,\left(0,\,0,\,0,\,2,\,6\right),\,\left(1,\,0,\,0,\,0,\,7\right),\\
&\left(0,\,1,\,0,\,0,\,7\right),\,\left(0,\,0,\,1,\,0,\,7\right),\,\left(0,\,0,\,0,\,1,\,7\right),\,\left(0,\,0,\,0,\,0,\,8\right).
\end{align*}

\end{document}

%% file: preamble.tex
\usepackage{times, amsthm, amssymb, amsmath, amsfonts, graphicx, mathrsfs}
\usepackage{mathtools}
\usepackage{tikz} 
\usetikzlibrary{arrows, cd, matrix, positioning}
\usepackage{xcolor}
\usepackage{bbm}
\usepackage{xypic}
\usepackage{mathscinet}
\usepackage{hyperref}
\usepackage{wrapfig}
\usepackage{subcaption}

\newtheorem{theorem}{Theorem}[section]
\newtheorem{lemma}[theorem]{Lemma}
\newtheorem{proposition}[theorem]{Proposition}
\newtheorem{corollary}[theorem]{Corollary}
\newtheorem{question}[theorem]{Question}

\newtheorem{introtheorem}{Theorem}

\theoremstyle{definition}
\newtheorem{definition}[theorem]{Definition}
\newtheorem{example}[theorem]{Example}
\newtheorem{conj}[theorem]{Conjecture}
 
\theoremstyle{remark}

\newcommand{\reg}{\operatorname{reg}}

\newcommand{\calF}{{\mathcal{F}}}
\newcommand{\calG}{{\mathcal{G}}}

\newcommand{\calO}{{\mathcal{O}}}
